\theoremstyle{plain}
\newtheorem{thm}{Theorem}[section]
\newtheorem{lem}[thm]{Lemma}
\newtheorem{prop}[thm]{Proposition}
\newtheorem{cor}[thm]{Corollary}
\theoremstyle{definition}
\newtheorem{defn}[thm]{Definition}
\theoremstyle{remark}
\newtheorem{rmq}[thm]{Remark}
\newcommand{\N}{\mathbb{N}}
\newcommand{\Z}{\mathbb{Z}}
\newcommand{\R}{\mathbb{R}}
\newcommand{\T}{\mathbb{T}}
\newcommand{\OO}{\mathcal{O}}
\newcommand{\eps}{\varepsilon}
\newcommand{\nl}{\left\|}
\newcommand{\nr}{\right\|}
\newcommand{\vphi}{\varphi}
\title{A symplectic non-squeezing theorem for BBM equation}
\author{David Roum\'egoux}
\address{University of Cergy-Pontoise, Department of Mathematics, CNRS, UMR 8088, F-95000 Cergy-Pontoise}
\email{david.roumegoux@u-cergy.fr}
\begin{document}

\begin{abstract}
We study the initial value problem for the BBM equation:

$$\left\{  
\begin{array}{l}
 u_t+u_x+uu_x-u_{txx}=0  \qquad x\in \T, t \in \R \\
u(0,x)=u_0(x)
\end{array}
\right.$$
We prove that the BBM equation is globaly well-posed on $H^s(\T)$ for $s\geq0$ and a symplectic non-squeezing theorem on $H^{1/2}(\T)$. That is to say the flow-map $u_0 \mapsto u(t)$ that associates to initial data $u_0 \in H^{1/2}(\T)$ the solution $u$ cannot send a ball into a symplectic cylinder of smaller width.

\end{abstract}

\maketitle

\section{Introduction}

In 1877 Joseph Boussinesq proposed a variety of models for describing the propagation of waves on shallow water surfaces, including what is now refered to as the Korteweg-de Vries (KdV) equation. A scaled KdV equation reads 
$$u_t+u_x+\eps(uu_x+u_{xxx})=0.$$

The Benjamin-Bona-Mahony (BBM) equation was introduced in \cite{BBM} as an alternative of the KdV equation. The main argument to derive the BBM equation is that, to
the first order in $\eps$, the scaled KdV equation is equivalent to
$$u_t+u_x+\eps(uu_x-u_{txx})=0.$$
Indeed, formally we have $u_t+u_x=O(\eps)$, hence $u_{xxx}=-u_{txx} + O(\eps)$.

In this article we shall consider the rescaled BBM equation:
 $$u_t+u_x+uu_x-u_{txx}=0.$$

In 2009, Jerry Bona and Nikolay Tzvetkov proved in \cite{BT} that BBM equation is globaly well-posed in $H^s(\R)$ if $s\geq 0$, and not even locally well-posed for negative values of $s$ (see also \cite{panthee}). The result extends to the periodic case (see section \ref{sec:BBMeq} below). Let us denote $\Phi_t$ the flow map of BBM equation on the circle $\T$. In this article we prove a symplectic non-squeeezing theorem for $\Phi_t$. That is, the flow map cannot squeeze a ball of radius $r$ of $H^{1/2}(\T)$ into a symplectic cylinder of radius $r'<r$. Precisely, let  $H^{1/2}_0(\T)=\left\lbrace u \in H^{1/2} / \int_{\T}  u = 0 \right\rbrace $ with the Hilbert basis
$$ \vphi_n^{+}(x)=\sqrt{\frac{n}{\pi(n^2+1)}} \cos(nx), \qquad \vphi_n^{-}(x)=\sqrt{\frac{n}{\pi(n^2+1)}} \sin(nx). $$
Set
$$B_r=\left\{ u \in H^{1/2}_0(\T) \; / \nl u \nr_{H^{1/2}} <r  \right\},$$
$$\mathcal{C}_{r,n_0}=\left\{ u=\sum p_n \vphi_n^+ + q_n \vphi_n^- \in H^{1/2}_0(\T) \; / p_{n_0}^2 + q_{n_0}^2 <r^2 \right\}.$$
The goal of this paper is to prove
\begin{thm}
\label{thm:BBMnonsqueezing}
 If $\Phi_t(B_r) \subset \mathcal{C}_{R,n_0}$ then $r\leq R$. 
\end{thm}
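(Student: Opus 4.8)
The plan is to realize the BBM flow as a symplectomorphism of the symplectic Hilbert space $(H^{1/2}_0(\T),\omega)$ and to exhibit it as a compact perturbation of a linear symplectic isometry, so that an infinite-dimensional version of Gromov's non-squeezing theorem applies. First I would write BBM in Hamiltonian form: setting $J=-(1-\partial_x^2)^{-1}\partial_x$, a skew-adjoint Fourier multiplier with symbol $-in/(1+n^2)$, and $H(u)=\int_\T(\tfrac12 u^2+\tfrac16 u^3)\,dx$, the equation reads $u_t=J\,\nabla_{L^2}H(u)$, with symplectic form $\omega(f,g)=\langle J^{-1}f,g\rangle_{L^2}$. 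A direct Fourier computation shows that in the coordinates $u=\sum p_n\vphi_n^++q_n\vphi_n^-$ the normalisation constant $\sqrt{n/(\pi(n^2+1))}$ is chosen precisely so that $\omega=\sum_n dp_n\wedge dq_n$ while simultaneously $\nl u\nr_{H^{1/2}}^2=\sum_n(p_n^2+q_n^2)$; that is, the $\vphi_n^\pm$ are Darboux coordinates for $\omega$ that are orthonormal for the $H^{1/2}$ metric. Since $\Phi_t$ is the time-$t$ map of a Hamiltonian flow it is a symplectomorphism, and the linear propagator $S(t)=e^{tJ}$ acts on each plane $(p_n,q_n)$ by the rotation of angle $nt/(1+n^2)$, hence is a linear symplectic isometry of $H^{1/2}_0(\T)$.

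The decisive structural point, and the reason the theorem holds on $H^{1/2}$, is that the nonlinear part of the flow is smoothing. Writing Duhamel's formula
$$\Phi_t(u_0)=S(t)u_0-\int_0^t S(t-s)(1-\partial_x^2)^{-1}\partial_x\Big(\tfrac12 u(s)^2\Big)\,ds,$$
the operator $(1-\partial_x^2)^{-1}\partial_x$ has symbol $in/(1+n^2)$ and hence gains one derivative, while the embedding $H^{1/2}(\T)\hookrightarrow L^4(\T)$ gives $u(s)^2\in L^2$ with $\nl u(s)^2\nr_{L^2}\lesssim \nl u(s)\nr_{H^{1/2}}^2$. Using the global well-posedness of the excerpt to bound $\sup_{s\in[0,t]}\nl u(s)\nr_{H^{1/2}}$ by a constant $C(t,r)$ for $u_0\in\overline{B_r}$, the integral term is bounded in $H^1(\T)$ uniformly on $\overline{B_r}$. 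Since $S(t)$ is an isometry on $H^{1/2}$, this exhibits $K_t:=\Phi_t-S(t)$ as a map sending $\overline{B_r}$ into a bounded subset of $H^1(\T)$; by the compact embedding $H^1(\T)\hookrightarrow\hookrightarrow H^{1/2}(\T)$ the operator $K_t$ is compact, uniformly on $\overline{B_r}$ and on compact time intervals.

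With these two facts I would conclude by an infinite-dimensional non-squeezing theorem for symplectomorphisms of the form (linear symplectic isometry) $+$ (compact map), in the spirit of Kuksin. Concretely, I would approximate $\Phi_t$ by the flow $\Phi_t^N$ of the Hamiltonian $H$ restricted to the finite-dimensional symplectic subspace $V_N=\mathrm{span}\{\vphi_m^\pm:1\le m\le N\}$: each $\Phi_t^N$ is a genuine symplectomorphism of $(V_N,\omega|_{V_N})$, so Gromov's finite-dimensional theorem forbids it from squeezing $B_r\cap V_N$ into $\mathcal{C}_{R,n_0}\cap V_N$ unless $r\le R$. The compactness of $K_t$ supplies the equicontinuity needed to upgrade this: it lets one show $\Phi_t^N(P_N u_0)\to\Phi_t(u_0)$ uniformly for $u_0\in\overline{B_r}$, so that a containment $\Phi_t(B_r)\subset\mathcal{C}_{R,n_0}$ forces $\Phi_t^N(B_r\cap V_N)\subset\mathcal{C}_{R+\eps,n_0}\cap V_N$ for every $\eps>0$ and all large $N$, whence $r\le R+\eps$ and finally $r\le R$.

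The main obstacle is this last step: reconciling the finite-dimensional approximation with the symplectic structure. Projecting the infinite-dimensional time-$t$ map to $V_N$ does not yield a symplectomorphism, so one must instead work with the flow of the truncated Hamiltonian and prove that these Galerkin flows converge to $\Phi_t$ uniformly on $\overline{B_r}$, with uniform-in-$N$ a priori bounds. Controlling $\Phi_t^N(P_Nu_0)-\Phi_t(u_0)$ in $H^{1/2}$ via the smoothing of the nonlinearity and the conservation laws of BBM (in particular the conserved $\int_\T(u^2+u_x^2)\,dx$) is where the real work lies; the symplectic-topology input, Gromov's theorem, is then applied as a black box in each finite dimension.
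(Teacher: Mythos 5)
Your overall strategy is the one the paper follows: put BBM in Hamiltonian form on $H^{1/2}_0(\T)$ with the $\vphi_n^\pm$ as Darboux coordinates that are simultaneously orthonormal for the $H^{1/2}$ metric, observe that the flow is a linear symplectic isometry composed with a smoothing perturbation, approximate by the flows of the truncated Hamiltonians, and invoke Gromov's theorem (theorem \ref{thm:gromov}) in each finite dimension. Where you diverge is the transfer to infinite dimensions: the paper packages the Galerkin approximation into the invariance of a Kuksin--Hofer--Zehnder capacity (theorems \ref{thm:capellipsoide} and \ref{thm:invarc}), whereas you propose the more direct route of proving $\Phi_t^N\to\Phi_t$ uniformly on $B_r\cap V_N$ and enlarging the cylinder by $\eps$. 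That shortcut is legitimate (it is closer in spirit to Bourgain's and to Colliander--Keel--Staffilani--Takaoka--Tao's arguments) and, granted the uniform convergence, does yield $r\le R+\eps$ for every $\eps>0$; what the capacity buys the paper is a statement (invariance of $c$ under $\Phi_T$) that is strictly stronger than non-squeezing and absorbs the translation-invariance issues once and for all.

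The soft spot is the uniform convergence itself, which you defer, and one claim along the way is not right as stated: \emph{the compactness of $K_t$ does not supply the equicontinuity you need}. Knowing that $K_t$ maps $\overline{B_r}$ into a bounded subset of $H^1$ gives precompactness of its image (this is what the paper uses to obtain (\ref{eqn:estimPhi})), but it says nothing about how $K_t(u_0)-K_t(v_0)$ is controlled by $u_0-v_0$, and exactly that control is needed when one compares $\widetilde{\Phi}(u)$ with $\widetilde{\Phi}(\Pi^N u)$ in the Galerkin comparison. This is the content of hypothesis (H3), i.e.\ of proposition \ref{prop:compactness}: a Lipschitz estimate for the nonlinear part from $H^{1/2-\eps}$ into $H^{1/2+\eps}$, whose proof requires the bilinear estimate of lemma \ref{lem:bilin1} with \emph{unequal} indices $r=\tfrac12$, $r'=\tfrac12-\eps$, $s=\tfrac12+\eps$, not merely the $H^{1/2}\hookrightarrow L^4$ bound you invoke. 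Likewise, the conserved quantity $\int_\T(u^2+u_x^2)$ is of no direct use at the $H^{1/2}$ level; the uniform-in-$N$ bounds come from rerunning the fixed-point and data-splitting arguments of theorems \ref{thm:bbmexistloc} and \ref{thm:bbmexistglob} for the truncated equation. Once the Lipschitz smoothing estimate is established, your limiting argument (equivalently, the paper's lemma \ref{lem:decompoPhi} followed by theorem \ref{thm:nonsqueezing}) closes the proof.
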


S. Kuksin initiated the investigation of non-squeezing results for infinite dimentional Hamiltonian systems (see \cite{kuksin}). In particular he proved that nonlinear wave equation has the non-squeezing property for some nonlinearities. This result were extended to certain stronger nonlinearities by Bourgain \cite{bourgain}, and he also proved with a different method that the cubic NLS equation on the circle $\T$ has the non-squeezing property. Using similar ideas Colliander, Keel, Staffilani, Takaoka and Tao obtained the same result for KdV equation on $\T$ (see \cite{CKSTT}).

In this article we will use the original theorem of Kuksin. In section \ref{sec:DI}, we present the construction of a capacity on Hilbert spaces introduced by Kuksin in \cite{kuksin}. This capacity is invariant with respect to the flow of some hamiltonian PDEs provided it has the form ``linear evolution + compact''. As a corollary of this result we get a non-squeezing theorem for these PDEs. Then we apply this theorem to the BBM equation in section \ref{sec:BBMeq}. We prove the global wellposedness of BBM equation on $H^s(\T)$ for $s\geq 0$, and some estimates on the solutions.

\section{Symplectic capacities in Hilbert spaces and non-squeezing theorem}
\label{sec:DI}

\subsection{The frame work and an abstract non-squeezing theorem}
\label{subsec:assumptions}

Let $(Z, \left\langle \cdot,\cdot \right\rangle )$ be a real Hilbert space with $\{ \varphi_j^{\pm} / j\geq 1 \}$ a Hilbert basis. For $n\in \N$ we denote $Z^n=\mbox{Span}(\{ \varphi_j^{\pm} / 1\leq j \leq n \})$, and $\Pi^n:Z\to Z^n$ the corresponding projector. We also denote $Z_n$ the space such that $Z=Z^n\oplus~Z_n$. Then, every $z\in Z$ admits the unique decomposition $z=z^n+z_n$ with $z_n \in Z_n$ and $z^n \in Z^n$.

We define $J:Z \to Z$ the skewsymmetric linear operator by 
$$J\varphi_j^\pm=\mp \varphi_j^\mp$$
and we supply $Z$ with a symplectic structure with the $2$-form $\omega$ defined by $\omega(\xi,\eta)= \left\langle J\xi,\eta \right\rangle $.

We take a self-adjoint operator $A$, such that 
\begin{equation} \label{eqn:ADZ} \forall j \in \Z,\; A \varphi_j^\pm=\lambda_j \varphi_j^\pm . \end{equation}
Define the Hamiltonian
$$f(z)=\frac{1}{2} \left\langle Az,z \right\rangle +h(z)$$
where $h$ is a smooth function defined on $Z \times \R$. The corresponding Hamiltonian equation has the form
\begin{equation} \label{eqn:EDP} \left\lbrace \begin{array}{l}  \dot{z}=JAz+J\nabla h(z) \\ z(0,\cdot)=z_0 \in Z  \end{array} \right. \end{equation}

If $Z_-$ is a Hilbert space, we denote 
$$Z<Z_-$$
if $Z$ is compactly embedded in $Z_-$ and $\{\varphi_j^\pm\}$ is an orthogonal basis of $Z_-$ (not an orthonormal one!). Clearly $Z$ is dense in $Z_-$. We identify $Z$ and its dual $Z^*$. Then $(Z_-)^*$ can be identified with a subspace $Z_+$ of $Z$ and we have 
$$Z_+<Z<Z_- .$$  
Denote $\|\cdot\|_-$ (resp. $\|\cdot\|_+$) the norm of $Z_-$ (resp. $Z_+$).

We also denote $B_R(Z)$ the ball centered at the origin of radius $R$. 

We impose the following assumptions:

\begin{description}
 \item [(H1)]  The equation (\ref{eqn:EDP}) defines a $C^1$-smooth global flow map $\Phi$ on $Z$. That is, for all $z_0 \in Z$ the equation (\ref{eqn:EDP}) has a unique solution $z(t)=\Phi_t(z_0)$ for $t\geq0$, and the flow map $\Phi_t:z_0 \mapsto z(t)$ is $C^1$-smooth.
 \item [(H2)]  The flow map $\Phi$ is uniformely bounded. That is for each $R>0$ and $T>0$, there exists $R'=R'_{R,T}$ such that 
$$\Phi_t(B_R(Z)) \subset B_{R'}(Z), \quad \mbox{ for }  |t| \leq T .$$
 \item [(H3)]  Writing the flow map $\Phi_t=e^{tJA}(I+\widetilde{\Phi}_t)$, we also impose the following \emph{compactness assumption} : fix $R>0$ and $T>0$, there exists $C_{R,T}$ such that
$$\forall u_0,u_0' \in B_R(Z), \; \nl \widetilde{\Phi}_T(u_0)-\widetilde{\Phi}_T(u_0') \nr_{Z_+} \leq C_{R,T} \nl u_0-u_0' \nr_{Z} .$$
\end{description}
Under these assumptions, it is well known that the flow maps $\Phi_t$ preserve the symplectic form.

The aim of this section is to show the following non-squeezing theorem 
\begin{thm}
\label{thm:nonsqueezing}
Assume $\Phi_T$ is the flow map of an equation of the form (\ref{eqn:EDP}) and satisfies the previous assumptions. If $\Phi_T$ sends a ball 
$$B_r=\{z \in Z / \|z-\bar{z}\|<r\},\quad \bar{z} \mbox{ fixed}$$
 into a cylinder 
\begin{multline*}
\mathcal{C}_{R,j_0}= \left\{ \left. z= \sum p_j \varphi_j^++q_j \varphi_j^-  \right/ (p_{j_0}-\bar{p}_{j_0})^2+(q_{j_0}-\bar{q}_{j_0})^2 < R^2 \right\} \\ j_0,\bar{p}_{j_0},\bar{q}_{j_0} \mbox{ fixed}
\end{multline*}
then $r\leq R$.
\end{thm}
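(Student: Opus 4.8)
The plan is to obtain the theorem as a corollary of the existence of a symplectic capacity on $Z$, in the sense introduced by Kuksin, whose construction will occupy the rest of this section. Concretely, I would exhibit a functional $c$, defined on bounded open subsets of $Z$, enjoying three properties: \emph{monotonicity}, $U\subset V \Rightarrow c(U)\le c(V)$; \emph{invariance} under the flow maps, $c(\Phi_T(U))=c(U)$, and more generally under any symplectomorphism of the admissible class ``symplectic linear isometry $+$ compact''; and \emph{normalization}, namely $c(B_r)\ge \pi r^2$ for a ball and $c(\mathcal{C}_{R,j_0})\le \pi R^2$ for a cylinder. Granting such a $c$, the proof is immediate: if $\Phi_T(B_r)\subset \mathcal{C}_{R,j_0}$ then
$$\pi r^2 \le c(B_r)=c(\Phi_T(B_r)) \le c(\mathcal{C}_{R,j_0}) \le \pi R^2,$$
where the equality is invariance and the middle inequality is monotonicity, whence $r\le R$.

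The substance of the argument therefore lies in constructing $c$ and proving its invariance under $\Phi_T$, and this is exactly where assumptions (H1)--(H3) enter. Following Kuksin, I would define $c$ by a finite-dimensional reduction: on each $Z^n$ one has a classical symplectic capacity (for instance the Gromov width), and $c(U)$ is recovered as a suitable limit over the projections $\Pi^n U$, the two normalization estimates then descending from finite-dimensional symplectic capacity theory and, for the cylinder bound, from Gromov's non-squeezing theorem in $Z^n$. The linear flow $e^{tJA}$ acts on each plane $\mathrm{Span}(\varphi_j^+,\varphi_j^-)$ as a rotation, so it is a symplectic isometry preserving every $Z^n$; the sole obstruction to a purely finite-dimensional argument is thus the nonlinear correction $\widetilde{\Phi}_T$. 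Assumption (H3) states precisely that $\widetilde{\Phi}_T$ maps $Z$-bounded sets into $Z_+$-bounded sets with a uniform Lipschitz constant, and since $Z_+ < Z$ is a \emph{compact} embedding this makes $\widetilde{\Phi}_T$ a compact map whose high-frequency tail $(I-\Pi^n)\widetilde{\Phi}_T$ is uniformly small on $B_r$ once $n$ is large. This is what permits $\Phi_T$ to be approximated by finite-rank symplectomorphisms to which the finite-dimensional theorem applies.

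The main obstacle I expect is the invariance $c(\Phi_T(U))=c(U)$, which is the one step genuinely using the structure of the equation rather than soft properties of the capacity. It requires combining two facts: that $\Phi_T$ preserves the form $\omega$ (which holds under (H1)--(H3), as noted above), and that the finite-dimensional truncations defining $c$ can be taken symplectic and made to converge. The compactness in the stronger norm $Z_+$ is the essential lever here: it guarantees that freezing the dynamics beyond mode $n$ introduces an error that is uniformly controlled on $B_r$, in particular in the cylinder direction $j_0$, so that the finite-dimensional non-squeezing bound is stable as $n\to\infty$. By contrast, monotonicity and the ball/cylinder normalizations should follow routinely from the definition of $c$ together with the finite-dimensional theory, so I would treat them quickly and concentrate the effort on the limiting and invariance argument.
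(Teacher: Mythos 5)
Your proposal is correct and is essentially the paper's own argument: Theorem \ref{thm:nonsqueezing} is deduced exactly as you say from a capacity $c$ on open sets of $Z$ via $\pi r^2=c(B_r)=c(\Phi_T(B_r))\le c(\mathcal{C}_{R,j_0})=\pi R^2$, with monotonicity (Proposition \ref{prop:A1}), flow invariance (Theorem \ref{thm:invarc}, resting on the approximation Lemma \ref{lem:decompoPhi} which uses (H3) precisely as you describe), and the normalizations of Corollary \ref{cor:capboule}. The only points your sketch glosses over are that the paper's $c$ is a Hofer--Zehnder--type capacity built from admissible Hamiltonians on $\OO\cap Z^n$ (not a limit of Gromov widths of projections $\Pi^n U$), and that since the ball and cylinder are not centered at the origin one also needs the translation invariance $c(\OO+\xi)=c(\OO)$, which the paper proves separately (Proposition \ref{prop:invartranslation}).
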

In fact, this theorem is a simple version of the conservation of a symplectic capacity on $Z$ by the flow map $\Phi_T$ (see subsection \ref{subsec:caphilbert} below) 

\begin{rmq}
 This theoreme implies the following fact. Fiw $\eps>0$, a time $T>0$, a Fourier mode $n_0$ and $r>0$ (no smallness conditions are imposed on $r$ or $T$), then there exists $u_0 \in H^{1/2}(\T)$ such that 
$$\nl u_0 \nr_{H^{1/2}} < r$$
and 
$$|\widehat{u(T)}(n_0)| > \frac{r-\eps}{(n_0^2 + 1)^{1/4}}$$
where $u$ solves (\ref{eqn:EDP}).

The non-squeezing theorem remains true if we don't suppose that the flow map is global in (H1), but the conclusion would be :\\
either 
$$|\widehat{u(T)}(n_0)| > \frac{r-\eps}{(n_0^2 + 1)^{1/4}}$$
or 
$$\sup_{0\leq t \leq T} \nl u(t) \nr_{H^{1/2}}=+\infty.$$
So we impose the global wellposedness in time for (\ref{eqn:EDP}) in order to rule out the second case.
\end{rmq}

\subsection{An approximation lemma}

In order to define a capacity, we will need to approximate the flow by finite-dimentional maps. We shall use the following lemma

\begin{lem}
\label{lem:decompoPhi}
Let $\Phi$ the flow at time $T$ of an equation (\ref{eqn:EDP}) satisfying the previous assumptions. For each $\varepsilon>0$ and $R>0$, there exists $N \in \N$ such that for $u\in B_R$ :
\begin{equation} \label{eqn:decompoPhi}
\Phi(u)=e^{tJA}(I+\widetilde{\Phi}_\varepsilon)(I+\widetilde{\Phi}_N)(u)
\end{equation}
where $(I+\widetilde{\Phi}_\varepsilon)$ and $(I+\widetilde{\Phi}_N)$ are symplectic diffeomorphisms satisfying
\begin{equation} \label{eqn:decompoPhi2}
\|\widetilde{\Phi}_\varepsilon(u)\|\leq \varepsilon \qquad \mbox{for } u \in (I+\widetilde{\Phi}_N)(B_R) 
\end{equation}
\begin{equation} \label{eqn:decompoPhi3}
\left(I+\widetilde{\Phi}_N\right)(u^N+u_N)=\left(I+\widetilde{\Phi}_N\right)(u^N)+u_N \quad \mbox{ for } u^N \in Z^N, u_N \in Z_N .
\end{equation}
\end{lem}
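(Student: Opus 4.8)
The plan is to strip off the linear evolution and then approximate the remaining nonlinear part by a finite-dimensional symplectic map, pushing the resulting error into the small factor. First I would set $\Psi:=e^{-TJA}\circ\Phi=I+\widetilde{\Phi}_T$, which is a symplectic diffeomorphism since $\Phi$ and the linear flow $e^{TJA}$ both preserve $\omega$; the statement then reduces to factoring $\Psi=(I+\widetilde{\Phi}_\varepsilon)(I+\widetilde{\Phi}_N)$ and recomposing with $e^{TJA}$. Two consequences of the hypotheses drive the construction: by (H2), $\Psi(B_R)\subset B_{R'}$ for some $R'$; and by (H3) together with the compact embedding $Z_+<Z$, the set $\widetilde{\Phi}_T(B_R)$ is bounded in $Z_+$, hence precompact in $Z$. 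In particular its tails are uniformly small: there is $N_0$ with $\|(I-\Pi^N)\widetilde{\Phi}_T(u)\|\le\varepsilon/2$ for every $u\in B_R$ and every $N\ge N_0$, which will take care of the component of the error lying in $Z_N$.

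I would then build the finite-dimensional factor as the flow of a truncated Hamiltonian. Put $f^N(z)=\tfrac12\langle Az,z\rangle+h(\Pi^N z)$; its Hamiltonian vector field is $JAz+J\Pi^N\nabla h(\Pi^N z)$, and since $J$ and $e^{tJA}$ preserve the splitting $Z=Z^N\oplus Z_N$ while the nonlinear term lies in $Z^N$, the flow $\Phi^N$ of $f^N$ acts as a genuine finite-dimensional symplectomorphism on $Z^N$ and purely linearly (by $e^{tJA}$) on $Z_N$. Consequently $I+\widetilde{\Phi}_N:=e^{-TJA}\circ\Phi^N$ has the product form $\xi^N\oplus\mathrm{Id}_{Z_N}$ with $\xi^N$ a symplectomorphism of $Z^N$; this is exactly the identity (\ref{eqn:decompoPhi3}), and the map is symplectic as a composition of symplectic maps. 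Defining the other factor by $I+\widetilde{\Phi}_\varepsilon:=\Psi\circ(I+\widetilde{\Phi}_N)^{-1}$, for $v=(I+\widetilde{\Phi}_N)(u)$ with $u\in B_R$ one gets $\widetilde{\Phi}_\varepsilon(v)=e^{-TJA}\big(\Phi(u)-\Phi^N(u)\big)$, so that, $e^{-TJA}$ being an isometry of $Z$, the estimate (\ref{eqn:decompoPhi2}) is equivalent to the uniform bound $\sup_{u\in B_R}\|\Phi(u)-\Phi^N(u)\|\le\varepsilon$ between the full and the truncated flow.

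This last estimate is the heart of the argument and the step I expect to be the main obstacle. Subtracting the Duhamel formulas for the two flows in the interaction picture, the discrepancy is generated by $J\big(\nabla h(z(t))-\Pi^N\nabla h(\Pi^N z(t))\big)$ integrated along the trajectory, and I would close it by Gronwall. The delicate point is that a naive Lipschitz bound on $\nabla h$ controls this difference only by the full high-mode content $\|(I-\Pi^N)z\|$, which is of order $R$ and does not shrink with $N$; the needed smallness has to come from the smoothing/compactness encoded in (H3), i.e. from the fact that high input modes affect the low-mode output only weakly. For the BBM nonlinearity this is the quantitative gain $\|(I-\Pi^N)z\|_{L^2}\lesssim N^{-1/2}\|z\|_{H^{1/2}}$, which makes the quadratic interactions feeding the low modes tend to $0$. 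Establishing this gain uniformly for $u\in B_R$ and $0\le t\le T$, and verifying that it survives the Gronwall loop so that $N$ can be chosen with $\sup_{u\in B_R}\|\Phi(u)-\Phi^N(u)\|\le\varepsilon$, is the technical crux; everything else is bookkeeping, and once it is done both (\ref{eqn:decompoPhi2}) and (\ref{eqn:decompoPhi3}) follow.
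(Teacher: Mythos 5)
Your construction coincides with the paper's: you truncate the Hamiltonian to $h\circ\Pi^N$, take $\Phi^N=e^{TJA}(I+\widetilde{\Phi}_N)$ to be the flow of the truncated equation (whence the product structure (\ref{eqn:decompoPhi3}) and symplecticity), define $I+\widetilde{\Phi}_\varepsilon$ as the remainder $(I+\widetilde{\Phi})\circ(I+\widetilde{\Phi}_N)^{-1}$, and reduce (\ref{eqn:decompoPhi2}) to the uniform bound $\sup_{u\in B_R}\|\widetilde{\Phi}(u)-\widetilde{\Phi}_N(u)\|_Z\le\varepsilon$. The only divergence is in how that last bound is obtained. The paper does not Gronwall on the vector fields: it splits $\widetilde{\Phi}(u)-\widetilde{\Phi}_N(u)$ into the tail $(I-\Pi^N)\widetilde{\Phi}(u)$, which is uniformly small because $\widetilde{\Phi}(B_R)$ is precompact in $Z$ by (H3); the increment $\Pi^N\bigl(\widetilde{\Phi}(u)-\widetilde{\Phi}(\Pi^N u)\bigr)$, controlled by the Lipschitz property in (H3) (used with the $Z_-$ norm on the data, where $\|(I-\Pi^N)u\|_{Z_-}\to 0$ uniformly on $B_R$); and the Galerkin discrepancy $\Pi^N\widetilde{\Phi}(\Pi^N u)-\widetilde{\Phi}_N(u)$. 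This keeps the argument at the level of the flow map, which is what the abstract hypotheses (H1)--(H3) actually provide; your Duhamel-plus-Gronwall comparison instead requires a smoothing/compactness property of $\nabla h$ itself (uniform tail-smallness of $\nabla h(z)$ on bounded sets), which is not formally among (H1)--(H3), although it does hold for BBM via Lemma \ref{lem:bilin2}. On the other hand, you correctly diagnose that a naive Lipschitz bound cannot close the estimate, and the paper's third term is precisely the full-versus-truncated flow comparison you single out as the crux (the paper compresses it into one line), so at that point neither argument is more complete than the other. Your route is sound provided you either add the compactness of the nonlinearity as a hypothesis or verify it in the application, as the bilinear estimates of Section \ref{sec:BBMeq} permit.
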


\begin{proof}
Recall that $\Phi=e^{TJA}(I+\widetilde{\Phi})$. First, we observe that for $|t| \leq T$, any $R>0$ and $u,v \in B_R(Z)$ we have
\begin{equation}
 \label{eqn:estimPhi}
\nl \widetilde{\Phi}(u) - \Pi^N \widetilde{\Phi}(u) \nr_Z \leq \eps_1(N) \underset{N \to + \infty}{\longrightarrow} 0 .
\end{equation}
Indeed, as $K =\bigcup_{|t| \leq T} \widetilde{\Phi}(B_r(Z))$ is precompact in $Z$ (by (H3)), then (\ref{eqn:estimPhi}) results from the following statement 
$$\sup_{u \in K} \nl u - \Pi^N u \nr \underset{N \to + \infty}{\longrightarrow} 0 .$$
Suppose that the convergence does not hold, then we can find a sequence $(u_n)$ in $K$ such that $\nl (I-\Pi^n)u_n \nr \geq \eps >0$. As $K$ is precompact there exists a subsequence $(u_{n_j})$ such that $u_{n_j} \rightarrow u$. For $n_j$ sufficiently large we have
$$\nl (I-\Pi^{n_j})(u) \nr \leq \eps/2, \quad \nl u_{n_j}-u\nr \leq \eps/2 .$$
Hence $\nl (I-\Pi^{n_j})(u_{n_j}) \nr \leq \eps $ and we get a contradiction.

Now we set $h_N=h \circ \Pi^N$. Then $\nabla h_N= \Pi^N \nabla h \Pi^N$. We define $\Phi^N$ the time $T$ flow of the equation 
\begin{equation}
 \label{eqn:EDPDF}
\dot{v}=J(Av+\nabla h_N (v))
\end{equation}
or, equivalently, $v=v^N+v_N\in Z^N + Z_N$ and
$$\left\lbrace \begin{array}{l}  \dot{v}^N=J(Av^N+\Pi^N \nabla h(v^N)) \\ \dot{v}_N=JAv_N \end{array} \right.$$
We write $\Phi^N=e^{TJA}(I+ \widetilde{\Phi}_N) .$

Since $\widetilde{\Phi}_N=0$ outside $Z^N$, $\widetilde{\Phi}_N$ has the desired form (\ref{eqn:decompoPhi3}). Define 
$$\widetilde{\Phi}_{\eps}=\left(\widetilde{\Phi}-\widetilde{\Phi}_N\right) \left(I+\widetilde{\Phi}_N\right)^{-1} ,$$
so we have
$$
 e^{TJA}\left(I+\widetilde{\Phi}_{\varepsilon} \right) \left(I+\widetilde{\Phi}_N\right) =e^{TJA}\left(I+\widetilde{\Phi}\right) = \Phi .
$$

Next we estimate the difference $\widetilde{\Phi}-\widetilde{\Phi}_N$. For $u \in B_R(Z)$ we have
\begin{multline*}
\nl    \widetilde{\Phi}(u) -\widetilde{\Phi}_N(u) \nr_Z  \leq \nl\widetilde{\Phi}(u)-\Pi^N \widetilde{\Phi}(u) \nr_Z + \nl \Pi^N \widetilde{\Phi}(u)-\Pi^N \widetilde{\Phi}(\Pi^N u) \nr_Z \\+ \nl \Pi^N \widetilde{\Phi}(\Pi^N u) - \widetilde{\Phi}_N(u) \nr_Z .
\end{multline*}
Hence by (\ref{eqn:estimPhi}) and assumption (H3), for $u \in B_R(Z)$ we have 
$$\nl \widetilde{\Phi}(u)-\widetilde{\Phi}_N(u) \nr_Z \leq C \eps(N) \underset{N \to + \infty}{\longrightarrow} 0 ,$$
so for $u \in \left(I+\widetilde{\Phi}_N\right)(B_R(Z))$
$$\nl \widetilde{\Phi}_{\eps}(u) \nr_Z \leq \eps(N) \underset{N \to + \infty}{\longrightarrow} 0 .$$
\end{proof}

\subsection{Symplectic capacities and non-squeezing theorem}
\subsubsection{Capacities in finite-dimentional space}

Consider $\R^{2n}$ supplied with the standard symplectic structure, that is $\omega(x,y)=\left\langle Jx,y \right\rangle$ where 
$$J=\left( \begin{array}{cc} 0 & -I \\ I & 0 \end{array} \right).$$
For $f:\R^{2n} \rightarrow \R$ a smooth function we define the hamiltonian vectorfield 
$$X_f=J \nabla f.$$

\begin{defn} \label{def:fadmDF}
Let $\OO$ an open set of $\R^{2n}$, $f \in C^\infty(\OO)$ and $m>0$. The function $f$ is called \emph{m-admissible} if
\begin{itemize}
\item $0\leq f(x) \leq m$ for $x \in \OO$, and $f$ vanishes on a nonempty open set of $\OO$, and $f|_{\partial \OO}=m$.
\item The set $\{z/f(z)<m\}$ is bounded and the distance from this set to $\partial \OO$ is $d(f)>0$.
\end{itemize}
\end{defn}

Following \cite{HZ} we define the capacity $c_{2n}(\OO)$ of an open set $\OO$ of $\R^{2n}$ as
\begin{align*}
& c_{2n}(\OO)  =\inf \left\{ m_* / \mbox{for each } m>m_* \mbox{ and each } m\mbox{-admissible function } f \mbox{ in } \OO \right. \\
& \left. \mbox{the vectorfield } X_f \mbox{ has a non constant periodic solution of period } \leq 1 \right\}.
\end{align*}

\begin{thm}
 \label{thm:c2n}
$c_{2n}$ is a symplectic capacity, that is
\begin{itemize}
 \item if $\OO_1 \subset \OO_2$ then $c_{2n}(\OO_1) \leq c_{2n}(\OO_2)$ \\
and if $\vphi:\OO \rightarrow \R^{2n}$ is a symplectic diffeomorphism then  $c_{2n}(\OO)=c_{2n}(\vphi(\OO))$.
 \item $c_{2n}(\lambda \OO)=\lambda^2 c_{2n}(\OO)$.
 \item $c_{2n}(B_1) =c_{2n}(\mathcal{C}_{r,1})=\pi$ where 
$$B_r=\left\{(p,q)/\sum(p_j^2+q_j^2) <r^2 \right\}, \mbox{ and } \mathcal{C}_{r,1}=\left\{(p,q)/(p_1^2+q_1^2) <r^2 \right\}.$$
\end{itemize}
\end{thm}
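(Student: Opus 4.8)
The plan is to check the three axioms in turn, the point being that the first two groups (monotonicity together with symplectic invariance, and conformality) are formal consequences of how $m$-admissible functions and their Hamiltonian orbits transform, while the only genuine analysis sits in the normalization. For monotonicity, suppose $\OO_1 \subset \OO_2$ and let $f$ be $m$-admissible on $\OO_1$. Since $\{f<m\}$ lies at positive distance $d(f)$ from $\partial\OO_1$, we have $f\equiv m$ near $\partial\OO_1$, so extending $f$ by the constant $m$ on $\OO_2\setminus\OO_1$ produces a smooth $m$-admissible function $\tilde f$ on $\OO_2$ with $\{\tilde f<m\}=\{f<m\}$; as $X_{\tilde f}\equiv 0$ wherever $\tilde f$ is locally constant, every nonconstant orbit of $X_{\tilde f}$ lies in $\OO_1$ and is an orbit of $X_f$. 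Hence, if every $m$-admissible function on $\OO_2$ has a nonconstant orbit of period $\leq 1$, so does every $m$-admissible function on $\OO_1$, and comparing the defining infima gives $c_{2n}(\OO_1)\leq c_{2n}(\OO_2)$. Symplectic invariance follows because a symplectomorphism $\vphi$ carries $m$-admissible functions to $m$-admissible functions via $f\mapsto f\circ\vphi^{-1}$ and intertwines the Hamiltonian fields, $\vphi_* X_f = X_{f\circ\vphi^{-1}}$, hence preserves periodic orbits and their periods. Conformality is the same computation for $\sigma_\lambda(z)=\lambda z$: one checks $X_{\lambda^2 f\circ\sigma_\lambda^{-1}}(\lambda w)=\lambda X_f(w)$, so that $f$ is $m$-admissible on $\OO$ without fast orbits iff $\lambda^2 f\circ\sigma_\lambda^{-1}$ is $\lambda^2 m$-admissible on $\lambda\OO$ without fast orbits; this rescales the admissibility threshold by $\lambda^2$ and yields $c_{2n}(\lambda\OO)=\lambda^2 c_{2n}(\OO)$.

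For the normalization, the inclusion $B_1\subset \mathcal{C}_{1,1}$ together with monotonicity reduces the claim to the two inequalities $c_{2n}(B_1)\geq\pi$ and $c_{2n}(\mathcal{C}_{1,1})\leq\pi$. The lower bound is elementary via radial Hamiltonians $f(z)=\psi(|z|^2)$: here $X_f=2\psi'(|z|^2)Jz$, and since $|z|$ is conserved along the flow, each orbit is a rigid rotation of angular speed $2\psi'$, so every nonconstant orbit has minimal period $\pi/|\psi'|$, which exceeds $1$ as soon as $|\psi'|<\pi$. Given any $m<\pi$, I would choose $\psi$ increasing from $0$ to $m$ with $0\leq\psi'<\pi$, flat near $0$ and equal to $m$ near $|z|^2=1$ (possible since the average slope $m$ can be kept below $\pi$); this is an $m$-admissible function on $B_1$ with no nonconstant orbit of period $\leq 1$, proving $c_{2n}(B_1)\geq\pi$.

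The main obstacle is the reverse inequality $c_{2n}(\mathcal{C}_{1,1})\leq\pi$, namely that every $m$-admissible function on the cylinder with $m>\pi$ must have a nonconstant periodic orbit of period $\leq 1$. This is precisely the existence theorem for periodic orbits underpinning the Hofer--Zehnder capacity, and it is the technical heart of \cite{HZ}: it is proved by a variational argument for the symplectic action functional on a loop space, where a nontrivial critical point is produced by a minimax (linking) scheme and the value $\pi$ appears as the action of the generating circle of the cylinder. I would quote this existence result directly from \cite{HZ} rather than reprove it; combined with the structural properties established above, it assembles into the statement that $c_{2n}$ is a symplectic capacity.
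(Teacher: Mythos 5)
Your proposal is correct, but it does substantially more than the paper, which disposes of this theorem with the single line ``See \cite{HZ} for a proof.'' You instead separate the statement into its formal part (monotonicity, symplectic invariance, conformality, and the lower bound $c_{2n}(B_1)\geq\pi$ via radial Hamiltonians with $0\leq\psi'<\pi$) and the genuinely hard part (the existence of a fast periodic orbit for every $m$-admissible function on the cylinder with $m>\pi$), proving the former and quoting \cite{HZ} only for the latter. This is a reasonable division of labour: the elementary arguments you give are exactly the finite-dimensional prototypes of what the paper itself carries out later in the Hilbert-space setting --- your extension-by-$m$ argument for monotonicity is Proposition \ref{prop:A1}(1), your rescaling computation is Proposition \ref{prop:A1}(2), and your radial-Hamiltonian lower bound is step (1) of the proof of Theorem \ref{thm:capellipsoide} --- so writing them out in $\R^{2n}$ first makes the later infinite-dimensional proofs easier to follow, at the cost of some duplication. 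Two small points to tidy up: the invariance $c_{2n}(\OO)=c_{2n}(\vphi(\OO))$ under a general symplectomorphism needs a word about why the quantitative admissibility condition (the positive distance $d(f)$ from $\{f<m\}$ to $\partial\OO$ in Definition \ref{def:fadmDF}) survives composition with $\vphi^{-1}$, since $\vphi$ need not be uniformly bi-Lipschitz near $\partial\OO$; this is harmless because $\{f<m\}$ has compact closure inside $\OO$, but it deserves a sentence. Also note that the paper's statement writes $c_{2n}(\mathcal{C}_{r,1})=\pi$, which is evidently a typo for $\mathcal{C}_{1,1}$ (consistent with $c_{2n}(\mathcal{C}_{r,1})=\pi r^2$ by conformality); you correctly work with $\mathcal{C}_{1,1}$.
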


See \cite{HZ} for a proof. An immediate consequence of this theorem is the non-squeezing theorem of M. Gromov \cite{gromov}.

\begin{thm}
 \label{thm:gromov}
The ball $B_r$ can be symplecticaly embedded into the cylinder $\mathcal{C}_{R,1}$ if and only if $r\leq R$.
\end{thm}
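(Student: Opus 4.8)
The plan is to prove Gromov's non-squeezing theorem (Theorem~\ref{thm:gromov}) as a direct corollary of the capacity axioms listed in Theorem~\ref{thm:c2n}. Since the capacity $c_{2n}$ is a symplectic invariant, the strategy is to show that no symplectic embedding of $B_r$ into $\mathcal{C}_{R,1}$ can exist when $r>R$, by comparing the capacities of the two sets; the converse direction (that $r\leq R$ suffices) is trivial since $B_r\subset \mathcal{C}_{r,1}\subset\mathcal{C}_{R,1}$ whenever $r\leq R$, giving an honest inclusion which is a fortiori a symplectic embedding.

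First I would observe that by the scaling axiom $c_{2n}(\lambda\OO)=\lambda^2 c_{2n}(\OO)$ together with the normalization $c_{2n}(B_1)=c_{2n}(\mathcal{C}_{R,1})=\pi$, one computes $c_{2n}(B_r)=\pi r^2$ and $c_{2n}(\mathcal{C}_{R,1})=\pi R^2$. Here I must be careful: the stated normalization reads $c_{2n}(\mathcal{C}_{r,1})=\pi$ with radius $r$, so I should interpret this as saying the capacity of a cylinder of radius $1$ equals $\pi$, and then rescale. Applying the scaling axiom to the cylinder gives $c_{2n}(\mathcal{C}_{R,1})=R^2 c_{2n}(\mathcal{C}_{1,1})=\pi R^2$, and similarly $c_{2n}(B_r)=\pi r^2$.

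Next, suppose $\vphi:B_r\to\mathcal{C}_{R,1}$ is a symplectic embedding. By monotonicity, $c_{2n}(\vphi(B_r))\leq c_{2n}(\mathcal{C}_{R,1})$, and by symplectic invariance $c_{2n}(\vphi(B_r))=c_{2n}(B_r)$. Combining these with the computations above yields $\pi r^2 \leq \pi R^2$, hence $r\leq R$. This establishes the nontrivial implication and completes the equivalence.

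The main obstacle here is not in the logical chain, which is short, but in the fact that all the analytic content is hidden inside Theorem~\ref{thm:c2n}, whose proof (deferred to \cite{HZ}) rests on the existence of a closed characteristic of controlled period on the boundary of any bounded convex (or star-shaped) energy surface. In particular, the normalization $c_{2n}(\mathcal{C}_{R,1})=\pi R^2$ is the deep part: the upper bound $c_{2n}(\mathcal{C}_{R,1})\leq \pi R^2$ follows from exhibiting an explicit $m$-admissible Hamiltonian supported in the first symplectic plane whose flow is $1$-periodic precisely when $m>\pi R^2$, but the lower bound $c_{2n}(\mathcal{C}_{R,1})\geq \pi R^2$ requires the hard Hofer--Zehnder minimax argument guaranteeing a short periodic orbit. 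Since the excerpt permits me to invoke Theorem~\ref{thm:c2n} as given, I would simply cite it and carry out the three-line deduction above; the genuine difficulty lies entirely in the cited capacity construction rather than in the present corollary.
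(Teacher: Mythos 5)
Your argument is correct and is exactly the deduction the paper intends: the paper gives no separate proof, simply declaring Theorem~\ref{thm:gromov} an ``immediate consequence'' of Theorem~\ref{thm:c2n}, and your chain $\pi r^2=c_{2n}(B_r)=c_{2n}(\vphi(B_r))\leq c_{2n}(\mathcal{C}_{R,1})=\pi R^2$ (plus the trivial inclusion $B_r\subset\mathcal{C}_{R,1}$ for the converse) is that consequence spelled out. You also correctly read the normalization $c_{2n}(\mathcal{C}_{r,1})=\pi$ as a typo for the unit cylinder and rescale accordingly, which is the right interpretation.
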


\subsubsection{Construction of a capacity on Hilbert spaces}
\label{subsec:caphilbert}
In this section we define a symplectic capacity on Hilbert spaces which is invariant with respect to the flow of the equation (\ref{eqn:EDP}). We will follow the construction of S. Kuksin (see \cite{kuksin}).

For $\OO$ an open set of $Z$ we denote $\OO^n=\OO \cap Z^n$ and observe that $\partial \OO^n \subset \partial \OO \cap Z^n$.

\begin{defn} \label{def:fadm}
Let $f \in C^\infty(\OO)$ and $m>0$. The function $f$ is called \emph{m-admissible} if
\begin{itemize}
\item $0\leq f(x) \leq m$ for $x \in \OO$, and $f$ vanishes on a nonempty open set of $\OO$, and $f|_{\partial \OO}=m$.
\item The set $\{z/f(z)<m\}$ is bounded and the distance from this set to $\partial \OO$ is $d(f)>0$.
\end{itemize}
\end{defn}

\begin{rmq} \label{rmq:d(f)}
If $f$ is $m$-admissible, denoting $\mbox{supp}(f)=\{z/0<f(z)<m\}$ we have
\begin{equation*} \begin{split}
\mbox{dist}(f^{-1}(0),\partial \OO)\geq d(f) , \\
\mbox{dist}(\mbox{supp}(f),\partial \OO) \geq d(f).
\end{split}
\end{equation*}
\end{rmq}

Denote $f_n=f|_{\OO^n}$ and consider $X_{f_n}$ the corresponding hamiltonian vectorfield on $\OO^n$. 
\begin{defn}
A $T$-periodic trajectory of $X_{f_n}$ is called \emph{fast} if it is not a stationnary point and $T\leq 1$.

A $m$-admissible function $f$ is called \emph{fast} if there exists $n_0$ (depending on $f$) such that for all $n\geq n_0$ the vectorfield $X_{f_n}$ has a fast solution.
\end{defn}

\begin{lem} \label{lem:solrapide}
Each periodic trajectory of $X_{f_n}$ is contained in $\mbox{supp}(f) \cap Z^n$.
\end{lem}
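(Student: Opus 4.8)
The plan is to exploit the fact that $f_n$ is a first integral of its own Hamiltonian flow, together with the two-sided bound $0\le f\le m$, which forces the two extreme level sets to be made entirely of critical points. Throughout, I understand (as is natural, and consistent with the definition of a \emph{fast} trajectory above) a periodic trajectory to be non-stationary.

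First I would record the conservation of energy. If $\gamma$ solves $\dot\gamma=X_{f_n}(\gamma)=J\nabla f_n(\gamma)$ on $\OO^n$, then
$$\frac{d}{dt} f_n(\gamma(t))=\langle \nabla f_n(\gamma),\dot\gamma\rangle=\langle \nabla f_n(\gamma),J\nabla f_n(\gamma)\rangle=0$$
since $J$ is skew-symmetric. Hence a periodic trajectory $\gamma$ is contained in a single level set $\{f_n=c\}$, and because $0\le f\le m$ on $\OO$ we have $c\in[0,m]$. Note also that, being a solution of the vector field defined on $\OO^n$, the whole orbit stays inside $\OO^n\subset Z^n$, so the boundary $\partial\OO$ never enters the discussion.

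Next I would rule out the two extreme values. Since $f_n\ge 0$ on the open set $\OO^n$, any point $z$ with $f_n(z)=0$ is a global minimum of $f_n$, so $\nabla f_n(z)=0$ and therefore $X_{f_n}(z)=0$: such a point is stationary. Symmetrically, since $f_n\le m$ and every point of $\OO^n$ is interior, any $z$ with $f_n(z)=m$ is a global maximum, again yielding $\nabla f_n(z)=0$ and $X_{f_n}(z)=0$. Thus both level sets $\{f_n=0\}$ and $\{f_n=m\}$ consist entirely of stationary points, so a non-stationary periodic trajectory cannot be contained in either of them.

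Combining the two steps, any periodic trajectory lies in a level set $\{f_n=c\}$ with $0<c<m$, that is, inside $\{z\in\OO^n \,/\, 0<f(z)<m\}=\mbox{supp}(f)\cap Z^n$, which is the claim. The only point requiring care, and the mild obstacle here, is recognizing that the two-sided bound on $f$ makes both extreme level sets critical, so that the global extrema of $f_n$ are exactly the stationary set to be discarded; the rest is the standard conservation argument.
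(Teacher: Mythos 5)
Your proof is correct and rests on the same key observation as the paper's: at any point of $\OO^n\setminus(\mbox{supp}(f)\cap Z^n)$ the function $f_n$ attains its minimum $0$ or maximum $m$, so the gradient and hence $X_{f_n}$ vanish there. The paper's version is even shorter --- it concludes directly that a non-stationary trajectory cannot pass through such a stationary point (by uniqueness of solutions), so your conservation-of-energy step, while perfectly valid, is not strictly needed.
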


\begin{proof}
Pick $z\in \OO^n \backslash \mbox{supp}(f)$, $f_n$ takes either its minimal or maximal value in $z$, hence $X_{f_n}(z)=0$. Therefore $z$ is a stationnary point and a fast trajectory cannot pass through it. 
\end{proof}

We are now in position to define a capacity $c$.

\begin{defn}
For an open set $\OO$ of $Z$ its capacity equals to
$$c(\OO)=\inf \left\{m_* / \mbox{each } m \mbox{-admissible function with } m>m_* \mbox{ is fast} \right\}.$$
\end{defn}

\begin{prop} \label{prop:A1}
Assume that $\OO_1$, $\OO_2$ and $\OO$ are open sets of $Z$ and $\lambda\neq 0$
\begin{enumerate}
\item if $\OO_1 \subset \OO_2$ then $c(\OO_1) \leq c(\OO_2)$ ;
\item $c(\lambda \OO)=\lambda^2 c(\OO)$.
\end{enumerate} 
\end{prop}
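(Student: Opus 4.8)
The statement has two parts: monotonicity under inclusion and the quadratic scaling law. Both should follow directly from the definition of the capacity $c(\OO)$ as an infimum over thresholds $m_*$ above which every $m$-admissible function is fast. The natural strategy in each case is to set up a correspondence between admissible functions on one set and admissible functions on the other that preserves (or transforms in a controlled way) the property of being fast, and then to translate this correspondence into an inequality between the two infima.

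For part (1), the plan is to show that every $m$-admissible function $f$ on $\OO_2$ restricts — or rather can be compared — to something admissible on $\OO_1$. Here I expect the cleaner direction to be the following: take an $m$-admissible $f$ on $\OO_1$; I want to produce from it an $m$-admissible $g$ on $\OO_2$ that is fast whenever $f$ is. The obstacle is that $f|_{\partial \OO_1}=m$ but $\partial\OO_1$ need not lie on $\partial\OO_2$, so one cannot simply extend $f$ by the constant $m$ and stay smooth or admissible on $\OO_2$. The fix is to use the fact that $\{f<m\}$ has positive distance $d(f)$ from $\partial\OO_1$ (Definition \ref{def:fadm} and Remark \ref{rmq:d(f)}), so $f$ can be extended by the value $m$ on a neighbourhood and then viewed inside the larger open set $\OO_2\supset\OO_1$, keeping $\{g<m\}=\{f<m\}$ bounded and at positive distance from $\partial\OO_2$. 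Since the sublevel sets and hence the Hamiltonian fields $X_{f_n}$ and $X_{g_n}$ agree on $\mathrm{supp}(f)\cap Z^n$, and by Lemma \ref{lem:solrapide} every periodic trajectory lives in $\mathrm{supp}(f)\cap Z^n$, the fast trajectories coincide. This gives that fastness of $g$ on $\OO_2$ is equivalent to fastness of $f$ on $\OO_1$, and taking infima yields $c(\OO_1)\le c(\OO_2)$.

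For part (2), the key is the scaling behaviour of $m$-admissible functions under the dilation $z\mapsto \lambda z$. Given an $m$-admissible $f$ on $\OO$, I would set $\tilde f(z)=\lambda^2 f(z/\lambda)$ on $\lambda\OO$ and check it is $(\lambda^2 m)$-admissible there. Then I track what the dilation does to a Hamiltonian trajectory: since $X_f=J\nabla f$ and the map $z\mapsto\lambda z$ rescales $\nabla f$ by $\lambda^{-1}$ while the factor $\lambda^2$ multiplies it by $\lambda^2$, a short computation shows $X_{\tilde f}$ at $\lambda z$ equals $\lambda$ times $X_f$ at $z$. Consequently a $T$-periodic trajectory of $X_{f_n}$ corresponds to a $(T/\lambda)$-periodic trajectory of $X_{\tilde f_n}$ after the change of variable — the reparametrisation in time is what produces the quadratic factor. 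The main point to get right is the bookkeeping of the period bound $T\le 1$ through this rescaling, so that $m>m_*$ for $f$ on $\OO$ corresponds exactly to $\lambda^2 m> \lambda^2 m_*$ for $\tilde f$ on $\lambda\OO$; this identifies the two threshold sets and gives $c(\lambda\OO)=\lambda^2 c(\OO)$.

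The step I expect to be the genuine obstacle is the admissibility bookkeeping in part (1): verifying that the extension $g$ of $f$ to the larger set is still $C^\infty$ with a bounded sublevel set at strictly positive distance from $\partial\OO_2$, and that no new periodic trajectories of $X_{g_n}$ are introduced in $\OO_2\setminus\OO_1$. Lemma \ref{lem:solrapide} is precisely the tool that rules out the latter, since it confines all periodic trajectories to $\mathrm{supp}(f)\cap Z^n$, which is unchanged. Part (2) is essentially a routine scaling calculation once the correspondence $\tilde f(z)=\lambda^2 f(z/\lambda)$ is written down; the only care needed is the sign/absolute-value handling so that $\lambda\neq0$ (rather than $\lambda>0$) is allowed, using that reversing orientation does not affect existence of a fast periodic orbit.
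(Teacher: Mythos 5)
Your part (1) is essentially the paper's argument: extend $f$ from $\OO_1$ to $\OO_2$ by the constant $m$ (smoothness of the extension is automatic because $\{f<m\}$ sits at distance $d(f)>0$ from $\partial\OO_1$, so $f\equiv m$ near $\partial\OO_1$), and use Lemma \ref{lem:solrapide} to confine every periodic trajectory of the extended field to $\mbox{supp}(f)\cap Z^n$, where the two Hamiltonian fields coincide; this transfers fastness back to $f$ and yields $c(\OO_1)\le c(\OO_2)$. The paper phrases this contrapositively (a non-fast admissible function on $\OO_1$ extends to a non-fast one on $\OO_2$), but the content is identical.

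Part (2) contains a genuine error in the mechanism, even though you wrote down the correct rescaled function $f^{\lambda}(w)=\lambda^2 f(w/\lambda)$. From $\nabla f^{\lambda}(w)=\lambda\,\nabla f(w/\lambda)$ one gets $X_{f^{\lambda}}(\lambda z)=\lambda\,X_f(z)$, and therefore $w(t)=\lambda z(t)$ solves $\dot w=X_{f^{\lambda}}(w)$ with the \emph{same} time parametrisation as $z$: the field is $\lambda$ times larger, but the orbit is $\lambda$ times longer, so the period is unchanged --- not divided by $\lambda$ as you claim. This matters, because ``fast'' means period $\le 1$ with a \emph{fixed} bound; if the period really rescaled by $1/\lambda$, the correspondence between fast admissible functions on $\OO$ and on $\lambda\OO$ would not match up (and for $\lambda<0$ a period $T/\lambda$ is meaningless). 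The prefactor $\lambda^2$ is chosen precisely so that no time reparametrisation occurs --- a pure spatial dilation $f(\cdot/\lambda)$ alone would rescale periods by $\lambda^2$ --- and the quadratic factor in $c(\lambda\OO)=\lambda^2 c(\OO)$ comes solely from the fact that $f$ is $m$-admissible on $\OO$ if and only if $f^{\lambda}$ is $\lambda^2 m$-admissible on $\lambda\OO$, i.e.\ from the rescaling of the admissibility threshold, not from any reparametrisation in time. With the period statement corrected, your argument closes exactly as in the paper.
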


\begin{proof}
(1) Assume $m<c(\OO_1)$, by definition of $c$ there exists a $m$-admissible function $f$ of $\OO_1$ which is not fast. Hence, there exists a sequence $(n_j) \rightarrow +\infty$ such that for every $j \in \N$, $X_{f_{n_j}}$ has no fast periodic trajectory. Define $\widetilde{f}$ on $\OO_2$ by 
$$\widetilde{f}(x)=\left\{
\begin{array}{ll}
f(x) & \mbox{if }x\in \OO_1 \\
m & \mbox{otherwise}
\end{array} \right.
$$
The function $\widetilde{f}$ is clearly $m$-admissible on $\OO_2$. \\
By lemma \ref{lem:solrapide}, for each $j \in \N$, each fast solution $x(t)$ of $X_{\widetilde{f}_{n_j}}$ lies in supp$\widetilde{f} \cap Z^{n_j}=$supp$f \cap Z^{n_j}$. Hence $x(t)$ is a fast trajectory of $X_{f_{n_j}}$ ($X_{\widetilde{f}_{n_j}}$ and $X_{f_{n_j}}$ are the same vectorfields on supp$(f)$ by definition of supp$(f)$).\\
Therefore, for each $j \in \N$ the vectorfield $X_{\widetilde{f}_{n_j}}$ of $\OO_2$ has no fast trajectory. Hence $\widetilde{f}$ is $m$-admissible but is not fast. Thus $c(\OO_2) \geq m$, and the first assertion follows.

(2) Define $f^\lambda=\lambda^2f(\lambda^{-1} \cdot)$  on $\lambda \OO$. Clearly $f$ is $m$-admissible on $\OO$ if and only if $f^\lambda$ is $\lambda^2 m$-admissible on $\lambda \OO$. Moreover $z(t) \in \OO^n$ is a $T$-periodic trajectory of $X_{f_n}$ if and only if $\lambda z(t) \in \lambda \OO^n$ is a $T$-periodic trajectory of $X_{f_n^\lambda}$. Therefore $c(\lambda \OO)=\lambda^2 c(\OO)$.
\end{proof}

\begin{lem} \label{lem:invarcdf}
If $F:Z \to Z$ has the form
$$F(z^n+z_n)=F^n(z^n)+z_n \qquad z=z^n+z_n \in Z=Z^n \oplus Z_n$$
with $F^n$ a symplectic diffeomorphism of $Z^n$, then $c(\OO)=c(F(\OO))$, for each open set $\OO$ of $Z$.
\end{lem}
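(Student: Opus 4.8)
The plan is to transport $m$-admissible functions across $F$ and check that both the admissibility and the ``fast'' property are preserved, so that the two families of thresholds defining the capacity coincide. Given an $m$-admissible function $f$ on $\OO$, set $g=f\circ F^{-1}$. Since $F$ has the stated block form, its inverse is $F^{-1}=(F^n)^{-1}\oplus \mathrm{id}_{Z_n}$, a smooth homeomorphism of $Z$; hence $g$ is smooth on $F(\OO)$ and $f\mapsto g$ is a bijection from smooth functions on $\OO$ to smooth functions on $F(\OO)$. The key structural observation is the compatibility with finite-dimensional sections: because $F$ fixes $Z_n$ pointwise and preserves $Z^n$, one checks that $(F(\OO))^n=F(\OO)\cap Z^n=F^n(\OO^n)$, and for $w\in F^n(\OO^n)$ one has $g(w)=f((F^n)^{-1}(w))$. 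Thus $g_n=f_n\circ (F^n)^{-1}$ on $Z^n$, i.e. $g_n$ is obtained from $f_n$ by the symplectic change of variables $F^n$ on the finite-dimensional symplectic space $Z^n$.

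Next I would verify that $g$ is $m$-admissible on $F(\OO)$. Since $F$ is a homeomorphism of $Z$ carrying $\OO$ onto $F(\OO)$ and $\partial\OO$ onto $\partial F(\OO)$, the conditions $0\le g\le m$, the vanishing of $g$ on a nonempty open set, and $g|_{\partial F(\OO)}=m$ transfer immediately from the corresponding properties of $f$. Boundedness of $\{g<m\}=F(\{f<m\})$ follows because $\{f<m\}$ is bounded: its projection onto the finite-dimensional space $Z^n$ is then relatively compact, so its image under the continuous map $F^n$ is bounded, while the $Z_n$-component is left unchanged.

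The one genuinely technical point, and the step I expect to be the main obstacle, is the distance condition, precisely because $F$ need not be an isometry. I would argue as follows. Put $U=\{f<m\}$ and $d=\operatorname{dist}(U,\partial\OO)>0$, and take $v\in U$, $v'\in\partial\OO$ with $\|v-v'\|\ge d$. If $\|v_n-v_n'\|\ge d/2$, then already $\|F(v)-F(v')\|\ge d/2$ since $F$ is the identity on $Z_n$. Otherwise $\|v^n-v'^n\|\ge \tfrac{\sqrt3}{2}d$, and I invoke compactness of $K:=\overline{\Pi^n(U)}\subset Z^n$ together with the fact that $F^n$ is a homeomorphism of $Z^n$ to get $\eta:=\inf\{\|F^n(a)-F^n(b)\|\;/\;a\in K,\ b\in Z^n,\ \|a-b\|\ge\tfrac{\sqrt3}{2}d\}>0$. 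Indeed, if $\eta=0$ one extracts $a_k\to a\in K$ with $\|F^n(a_k)-F^n(b_k)\|\to 0$; then $F^n(b_k)\to F^n(a)$, and continuity of $(F^n)^{-1}$ forces $b_k\to a$, contradicting $\|a_k-b_k\|\ge\tfrac{\sqrt3}{2}d$. Combining the two cases gives $\operatorname{dist}(F(U),\partial F(\OO))\ge\min(d/2,\eta)>0$, so $g$ is $m$-admissible.

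Finally I would transfer the ``fast'' property. On the finite-dimensional symplectic space $Z^n$ the map $F^n$ is a symplectomorphism and $g_n=f_n\circ (F^n)^{-1}$, so the Hamiltonian vector fields satisfy $X_{g_n}=(F^n)_*X_{f_n}$; equivalently $t\mapsto F^n(x(t))$ is an integral curve of $X_{g_n}$ whenever $x(t)$ is one of $X_{f_n}$. Hence $F^n$ yields a period-preserving bijection between the periodic trajectories of $X_{f_n}$ and those of $X_{g_n}$, sending non-constant curves to non-constant curves (all contained in the supports by Lemma \ref{lem:solrapide}). Thus $X_{f_n}$ has a fast trajectory iff $X_{g_n}$ does, for every $n$, so $f$ is fast iff $g$ is fast, with the same threshold $n_0$. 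Since $f\mapsto f\circ F^{-1}$ is a bijection matching $m$-admissible functions on $\OO$ with those on $F(\OO)$ and preserving fastness, the sets of admissible $m_*$ in the definition of $c$ are identical, whence $c(\OO)=c(F(\OO))$.
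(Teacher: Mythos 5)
Your proof follows the same route as the paper's: transport admissible functions across $F$ (you use $f\mapsto f\circ F^{-1}$, the paper uses $f\mapsto f\circ F$; by symmetry this is immaterial) and check that admissibility and fastness are preserved, using that $F^n$ is a symplectomorphism of the finite-dimensional sections. You are in fact more careful than the paper on the one delicate point, namely that the positive distance $d(f)>0$ survives under the non-isometric map $F$, and your compactness argument for this is correct. One small repair: your trajectory correspondence is established only on the single section $Z^n$ where $n$ is the block level of $F$, whereas fastness requires fast orbits of $X_{f_p}$ for all sufficiently large $p$; you should add that for every $p\geq n$ the restriction of $F$ to $Z^p$ is again a symplectomorphism of $Z^p$ (it equals $F^n\oplus\mathrm{id}$ on $Z^p=Z^n\oplus(Z^p\cap Z_n)$), so the same argument applies at each level $p\geq n$ --- the phrase ``for every $n$'' is not literally correct for levels below $n$, but only levels $p\geq n$ matter for fastness.
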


\begin{proof}
We observe that if $f$ is $m$-admissible in $F(\OO)$ and $f$ is fast then $f \circ F$ is $m$-admissible in $\OO$ and $f \circ F$ is fast. Indeed $F^*:f\mapsto f\circ F$ clearly sends $m$-admissible functions in $F(\OO)$ to similar ones in $\OO$, and for $p \geq n$ it tranforms $X_{{(f \circ F)}^p}$ into $X_{f^p}$. Hence admissible and fast functions are preserved by $F$ and its inverse ($F$ is the identity outside of $Z^n$ which is a finite-dimentional space), and the result follows.
\end{proof}

\begin{prop} \label{prop:invartranslation}
For each open set $\OO$ of $Z$ and $\xi$ in $Z$, we have
$$c(\OO)=c(\OO+\xi) .$$
\end{prop}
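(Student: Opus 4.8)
The plan is to combine the exact invariance under finite-rank translations, which is already contained in Lemma~\ref{lem:invarcdf}, with a limiting argument controlled by the monotonicity of Proposition~\ref{prop:A1} and by an inner-continuity property of the capacity. The point is that a translation $T_\xi:z\mapsto z+\xi$ is \emph{not} of the finite-rank form required by Lemma~\ref{lem:invarcdf} unless $\xi$ has finite Fourier support, so the tail of $\xi$ must be absorbed in a limit.

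First I would treat translations with finite Fourier support. If $\xi\in Z^{n_0}$, then writing $z=z^{n_0}+z_{n_0}$ one has $T_\xi(z)=(z^{n_0}+\xi)+z_{n_0}$, so $T_\xi$ is the identity on $Z_{n_0}$ and $z^{n_0}\mapsto z^{n_0}+\xi$ is a symplectic diffeomorphism of the finite-dimensional space $Z^{n_0}$. Hence $T_\xi$ has exactly the form of Lemma~\ref{lem:invarcdf}, giving $c(\OO)=c(\OO+\xi)$. For a general $\xi\in Z$, decompose $\xi=\xi^N+\xi_N$ with $\xi^N=\Pi^N\xi\in Z^N$ and $\xi_N\in Z_N$. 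Applying the finite-rank case to the translation by $\xi^N$ of the set $\OO+\xi_N$ gives
$$c(\OO+\xi)=c\big((\OO+\xi_N)+\xi^N\big)=c(\OO+\xi_N)\qquad\text{for every }N,$$
and since $\|\xi_N\|\to0$ as $N\to\infty$, the whole problem reduces to small translations living in the tail.

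Next I would establish the key inner-continuity statement: writing $\OO_\delta=\{z\in\OO / B_\delta(z)\subset\OO\}$ with $B_\delta(z)=\{w\in Z/\|w-z\|<\delta\}$, one has $c(\OO_\delta)\to c(\OO)$ as $\delta\to0$. Monotonicity gives $c(\OO_\delta)\le c(\OO)$, so only the lower bound is at issue. Fix $m<c(\OO)$ and pick an $m$-admissible $f$ on $\OO$ that is not fast; then $d(f)>0$ and, by Remark~\ref{rmq:d(f)}, the bounded set $\{f<m\}$ lies at distance $\ge d(f)$ from $\partial\OO$, hence inside $\OO_\delta$ for every $\delta<d(f)$. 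For such $\delta$ the restriction $f|_{\OO_\delta}$ is again $m$-admissible: its support is unchanged, and $f\equiv m$ outside $\{f<m\}$ so in particular $f\equiv m$ near $\partial\OO_\delta$. Crucially the support is untouched, so by Lemma~\ref{lem:solrapide} every periodic orbit of $X_{f_n}$ lies in $\mathrm{supp}(f)\cap Z^n$, where the vector fields for $\OO$ and for $\OO_\delta$ coincide; thus $f|_{\OO_\delta}$ is not fast either, and $c(\OO_\delta)\ge m$. Letting $\delta\to0$ and then $m\uparrow c(\OO)$ proves the claim.

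Finally I would assemble the pieces. If $\|\eta\|<\delta$ then $\OO_\delta\subset\OO+\eta$ (for $z$ with $B_\delta(z)\subset\OO$ one has $z-\eta\in B_\delta(z)\subset\OO$), so Proposition~\ref{prop:A1} gives $c(\OO+\eta)\ge c(\OO_\delta)$. Taking $\eta=\xi_N$ with $N$ large enough that $\|\xi_N\|<\delta$, and using the reduction of the second paragraph,
$$c(\OO+\xi)=c(\OO+\xi_N)\ge c(\OO_\delta),$$
and letting $\delta\to0$ yields $c(\OO+\xi)\ge c(\OO)$ by inner continuity. Applying this same inequality to the set $\OO+\xi$ and the vector $-\xi$ gives $c(\OO)\ge c(\OO+\xi)$, hence equality. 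I expect the main obstacle to be precisely the inner-continuity step: since the existence or absence of fast periodic orbits is not stable under arbitrary small perturbations of the Hamiltonian, one cannot compare dynamics on shifted affine slices directly, and the argument must instead exploit that eroding $\OO$ leaves both the admissible function and its support — and therefore all of its periodic orbits — literally unchanged.
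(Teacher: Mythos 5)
Your proof is correct, and while its first half (reducing to a translation by the tail $\xi_N$ via Lemma \ref{lem:invarcdf}, since translation by $\xi^N\in Z^N$ is a symplectic diffeomorphism of $Z^N$ acting as the identity on $Z_N$) coincides with the paper's, the second half is organized differently. The paper fixes $n_0$ with $\|\xi_{n_0}\|<\frac12 d(f)$, takes an $m$-admissible $f$ on $\OO+\xi$, extends it by the value $m$ and restricts it to $\OO_1=\OO+\xi^{n_0}$, then transfers fast orbits back through $\mathrm{supp}(f)$; you instead isolate a reusable inner-continuity statement $c(\OO_\delta)\to c(\OO)$ for the $\delta$-erosion of $\OO$ and conclude from the inclusion $\OO_\delta\subset\OO+\eta$ for $\|\eta\|<\delta$ together with the monotonicity of Proposition \ref{prop:A1}. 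Both arguments rest on the same mechanism --- an admissible function equals $m$ throughout the $d(f)$-buffer of the boundary, so passing to a domain that differs only within that buffer changes neither admissibility nor the periodic orbits, by Lemma \ref{lem:solrapide} --- but yours runs the comparison through a third, smaller domain contained in both, and only ever restricts a function already known to be non-fast, whereas the paper extends a function of unknown type and shows it must be fast. Your version buys a genuinely reusable continuity property of $c$ under erosion at the cost of one extra lemma. Two small points to tidy up: $\OO_\delta=\{z\,/\,B_\delta(z)\subset\OO\}$ is closed when $B_\delta(z)$ is the open ball, so you should define it by $\mathrm{dist}(z,Z\setminus\OO)>\delta$ to stay within the class of open sets on which $c$ is defined; and, exactly as in the paper's own proof of Proposition \ref{prop:A1}, the step ``$m<c(\OO)$ yields an $m$-admissible non-fast $f$'' should strictly be read as yielding an $m'$-admissible non-fast $f$ for some $m'>m$, which serves your purpose just as well since it gives $c(\OO_\delta)\ge m'>m$.
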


\begin{proof}
Denote $\OO_\xi=\OO+\xi$. It is sufficient to prove that $c(\OO)\leq c(\OO+\xi)$ (change $\xi$ into $-\xi$).\\
Denote $\xi=\xi^{n_0}+\xi_{n_0} \in Z^{n_0} + Z_{n_0}$ ($n_0$ will be fixed later) and $\OO_1=\OO+\xi^{n_0}$.By lemma \ref{lem:invarcdf} $c(\OO_1)=c(\OO)$. We also remark that $\OO_\xi=\OO_1+\xi_{n_0}$.

Take any $m$-admissible function $f$ on $\OO_\xi$ with $m>c(\OO)$. We wish to check that $f$ is fast. \\
Since $\partial \OO_\xi \subset \partial \OO_1 + \xi_{n_0}$ and $\|\xi_n\| \underset{n\to+\infty}{\longrightarrow} 0$, we have
$$\mbox{dist}(\partial \OO_1,\partial \OO_\xi)  \leq \mbox{dist}(\partial \OO_1,\partial \OO_1 + \xi_{n_0})  \leq \|\xi_{n_0}\| \underset{n_0\to+\infty}{\longrightarrow} 0 .$$
Pick $n_0$ such that 
\begin{equation} \label{eqn:do1doxi} \mbox{dist}(\partial \OO_1,\partial \OO_\xi) \leq \|\xi_{n_0}\| < \frac{1}{2}d(f) . \end{equation} 
We extend $f$ outside $O_\xi$ with $f(z)=m$ if $z \notin \OO_\xi$ and we denote $\widetilde{f}$ its restriction to $\OO_1$.

$f$ equals $m$ on a $d(f)$-neighbourhood of $\partial \OO_\xi$. By (\ref{eqn:do1doxi}), we deduce that $\widetilde{f}$ equals $m$ on a $\frac{1}{2}d(f)$-neighbourhood of $\partial \OO_1$.

By remark \ref{rmq:d(f)} we have  $\mbox{dist}(f^{-1}(0),\partial \OO_\xi)\geq d(f)$. Hence, by (\ref{eqn:do1doxi}),we have $\mbox{dist}(f^{-1}(0),\partial \OO_1)\geq \frac{1}{2}d(f)$, and in particular $\widetilde{f}$ vanishes on a nonempty open set of $\OO_1 \cap \OO_\xi \subset \OO_1$. Therefore $\widetilde{f}$ is $m$-admissible. 

Since $c(\OO_1)=c(\OO)< m$, it follows that $X_{\widetilde{f}_n}$ has a fast trajectory in $\OO_1^n$ if $n\geq n_0$ is sufficiently large. By lemma \ref{lem:solrapide} this trajectory lies in $\mbox{supp}\widetilde{f}=\mbox{supp}f \subset \OO_1 \cap \OO$. Hence this trajectory is a fast solution of $X_{f_n}$, and the function $f$ is fast.
\end{proof}

If $\boldsymbol{r}=(r_j)_{j \in \N^*}$ is a sequence of $\R_+^* \cup\{+\infty\}$ with $\displaystyle 0< r=\inf_{j\in\N^*}r_j<+\infty$, we define
$$D(\boldsymbol{r})= \left\{ \left. z= \sum_{j=1}^{+\infty} p_j \varphi_j^++q_j \varphi_j^-  \right/  \forall j \in \N,\; p_j^2+q_j^2 < r_j^2 \right\} ,$$
$$E(\boldsymbol{r})= \left\{ \left. z= \sum_{j=1}^{+\infty} p_j \varphi_j^++q_j \varphi_j^-  \right/  \sum_{j=1}^{+\infty} \frac{p_j^2+q_j^2}{r_j^2} <1 \right\} .$$

Remark that if $\boldsymbol{r}=(r,+\infty,\ldots,+\infty)$, $D(\boldsymbol{r})$ is a symplectic cylinder $\mathcal{C}_{r,1}$.

\begin{thm} \label{thm:capellipsoide}
We have $c(E(\boldsymbol{r}))=c(D(\boldsymbol{r}))=\pi r^2$
\end{thm}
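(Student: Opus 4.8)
The plan is to prove the two equalities $c(E(\boldsymbol r))=\pi r^2$ and $c(D(\boldsymbol r))=\pi r^2$ by combining a monotonicity squeeze with the exact computation of the capacity of a cylinder. The key structural observation is that both the ellipsoid $E(\boldsymbol r)$ and the polydisc $D(\boldsymbol r)$ are trapped between two symplectic cylinders associated with the index $j_0$ realizing the infimum $r=r_{j_0}$ (or approaching it, if the infimum is not attained). Precisely, writing $\mathcal C_{r,j}=\{z=\sum p_k\varphi_k^++q_k\varphi_k^- \,/\, p_j^2+q_j^2<r^2\}$ for the cylinder over the $j$-th symplectic plane, I would first record the chain of inclusions
\begin{equation*}
B_r(Z)\subset E(\boldsymbol r)\subset \mathcal C_{r,j_0},\qquad B_r(Z)\subset D(\boldsymbol r)\subset \mathcal C_{r,j_0}.
\end{equation*}
The right-hand inclusions hold because in $E(\boldsymbol r)$ one has $\tfrac{p_{j_0}^2+q_{j_0}^2}{r_{j_0}^2}\le\sum_k \tfrac{p_k^2+q_k^2}{r_k^2}<1$ so $p_{j_0}^2+q_{j_0}^2<r_{j_0}^2=r^2$, and similarly $p_{j_0}^2+q_{j_0}^2<r_{j_0}^2=r^2$ in $D(\boldsymbol r)$; the left-hand inclusions hold since $r=\inf_j r_j$ forces every coordinate constraint to be satisfied inside the ball of radius $r$. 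By Proposition~\ref{prop:A1}(1), monotonicity of $c$ then yields $c(B_r(Z))\le c(E(\boldsymbol r))\le c(\mathcal C_{r,j_0})$ and the same sandwich for $D(\boldsymbol r)$, so it suffices to show that both the ball and every such cylinder have capacity $\pi r^2$.

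Next I would compute $c(\mathcal C_{r,j_0})$ and $c(B_r(Z))$. For the cylinder, a symplectic permutation of the basis planes (a map of the type covered by Lemma~\ref{lem:invarcdf}, acting on a finite-dimensional subspace containing $\varphi_{j_0}^\pm$) identifies $\mathcal C_{r,j_0}$ with $\mathcal C_{r,1}$ up to a capacity-preserving transformation, and scaling by Proposition~\ref{prop:A1}(2) reduces to the unit case. The heart of the matter is therefore to verify $c(\mathcal C_{r,1})=\pi r^2$ and $c(B_r(Z))=\pi r^2$ directly from the definition of $c$ through the finite-dimensional capacities $c_{2n}$. Here I would exploit the fact that the finite-dimensional sections of these infinite-dimensional sets are exactly the standard cylinder and ball in $Z^n\cong\R^{2n}$, for which Theorem~\ref{thm:c2n} gives $c_{2n}(\mathcal C_{r,1})=c_{2n}(B_r)=\pi r^2$ (by scaling from the value $\pi$ at radius $1$). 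The bridge from $c_{2n}$ to $c$ is the observation that an $m$-admissible function $f$ on an open set $\OO\subset Z$ restricts to an $m$-admissible function $f_n$ on each section $\OO^n\subset Z^n$, and "$f$ is fast" means precisely that $X_{f_n}$ has a fast periodic orbit for all large $n$. Thus $c(\OO)$ is governed by the behaviour of the family $(c_{2n}(\OO^n))_n$: if $m>\pi r^2$ then for all large $n$ we have $m>c_{2n}(\OO^n)$, forcing a fast orbit and showing $f$ is fast, whence $c(\OO)\le\pi r^2$; conversely one produces, for $m<\pi r^2$, admissible functions whose sections admit no fast orbit to obtain $c(\OO)\ge\pi r^2$.

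I expect the main obstacle to be this lower bound $c(\mathcal C_{r,1})\ge\pi r^2$ and, symmetrically, $c(B_r(Z))\ge\pi r^2$, because it requires constructing a genuinely infinite-dimensional $m$-admissible test function (for each $m<\pi r^2$) that is \emph{not} fast, i.e.\ whose finite-dimensional restrictions carry no fast periodic orbit for a sequence of dimensions tending to infinity. The delicate point is the admissibility bookkeeping in infinite dimensions: one must arrange that the sublevel set $\{f<m\}$ stays bounded with a uniform positive distance $d(f)$ to the boundary, while the restrictions $f_n$ inherit the standard extremal configuration for which Theorem~\ref{thm:c2n} prohibits short periodic orbits when $m<c_{2n}(\OO^n)$. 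A natural device is to take $f$ depending only on the symplectic action $p_{j_0}^2+q_{j_0}^2$ (for the cylinder) or on a suitable convex combination of actions (for the ball), composed with a fixed profile function, so that each $f_n$ is exactly the extremal admissible function realizing $c_{2n}$ at the unit ball or cylinder; the compactness structure from $Z<Z_-$ is what guarantees that the sublevel sets remain bounded in $Z$ and that $d(f)>0$ survives the passage to the limit. Once both extremal values equal $\pi r^2$, the sandwich collapses and both $c(E(\boldsymbol r))$ and $c(D(\boldsymbol r))$ equal $\pi r^2$, completing the proof.
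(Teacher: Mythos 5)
Your proposal follows essentially the same route as the paper: a monotonicity/scaling sandwich via Proposition \ref{prop:A1} (the paper uses $B_r\subset E(\boldsymbol r)\subset D(\boldsymbol r)$ rather than passing through a cylinder, but this is cosmetic), a lower bound $c(B_1)\ge\pi$ obtained by exhibiting a non-fast admissible function of the form $H(x)=f(\|x\|^2)$ whose profile satisfies $0\le f'<\pi$ so that in action--angle variables every non-constant periodic orbit of $X_{H_n}$ has period $>1$, and an upper bound obtained by observing that each restriction $f_n$ is $m$-admissible on $\OO^n$ while $c_{2n}(\OO^n)=\pi\min_{1\le j\le n}r_j^2\to\pi r^2<m$, which forces a fast orbit for all large $n$.

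Two caveats. First, your side-goal $c(\mathcal C_{r,1})\ge\pi r^2$ via a test function depending only on $p_1^2+q_1^2$ cannot work as written: such a function has an unbounded sublevel set $\{f<m\}$ inside the cylinder, so it is not admissible in the sense of Definition \ref{def:fadm}. Fortunately this lower bound is not needed for your sandwich (only $c(B_r)\ge\pi r^2$ and $c(\mathcal C_{r,1})\le\pi r^2$ are used), and it follows anyway from $B_r\subset\mathcal C_{r,1}$ once the ball is handled. Second, the compact embedding $Z<Z_-$ plays no role in the admissibility bookkeeping: for the radial test function on $B_1$, boundedness of the sublevel set and positivity of $d(f)$ are immediate from the choice of profile ($f\equiv m$ near $\|x\|^2=1$); the only genuinely infinite-dimensional point to record is that $f^{-1}(0)$, being open and nonempty, meets $Z^n$ for $n$ large, so the restrictions $f_n$ are indeed $m$-admissible.
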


\begin{proof}
We have to check the following inequalities 
\begin{enumerate}
	\item $c(E(\boldsymbol{r})) \geq \pi r^2$
	\item $c(D(\boldsymbol{r})) \leq \pi r^2$
\end{enumerate}
then we will conclude by proposition \ref{prop:A1}.

(1) It is sufficient to prove that $c(B_1) \geq \pi$ (then the result follows by proposition \ref{prop:A1}).

Define $m=\pi-\varepsilon$. Choose $f:[0,1] \rightarrow \R_+$ satisfying : \\
$\left\{
\begin{array}{l}
0 \leq f '(t) < \pi \mbox{  for } t \in [0,1]   \\
f(t)=0 \mbox{  for } t \mbox{ near } 0   \\
f(t)=m \mbox{  for } t \mbox{  near } 1 
\end{array}
\right.$

Then, define $H(x)=f(\|x\|^2)$ for $x$ in $B(1)$. $H$ is $m$-admissible. We want to prove that $H$ is not fast. Consider
$$H_n(x)=f \left(\sum_{j=1}^n (p_j^2+q_j^2)\right), \quad \mbox{where } x=\sum_{j} (p_j \varphi_j^++q_j \varphi_j^-) .$$

Using the variables $I_j=\frac12(p_j^2+q_j^2)$ and $\theta_j=\arctan \left( \frac{p_j}{q_j} \right)$ we observe that non-constant periodic solutions corresponding to this hamiltonian has a period $T>1$. Hence $X_{H_n}$ has no fast trajectory and $H$ is not fast.

(2) Denote $\OO=D(\boldsymbol{r})$. Pick $m>\pi r^2$ and $f$ a $m$-admissible function in $\OO$. Since $f^{-1}(0)$ is not empty, there exists $n$ such that $f^{-1}(0) \cap Z^n \neq \emptyset$. Denote $f_n=f|_{\OO^n}$. Since $\partial \OO^n \subset \partial \OO$, we deduce that $f_n$ equals $m$ on a neighbourhood of $\partial \OO^n$. Hence $f_n$ is $m$-admissible.

Since $\displaystyle c_{2n}(\OO^n)=\pi \min_{1\leq j \leq n} r_j^2$ , we have
$$c_{2n}(\OO^n) \underset{n \to +\infty}{\longrightarrow} \pi \inf_{j \geq 1} r_j^2=\pi r^2 < m .$$
Hence, for $n$ sufficiently large $c_{2n}(\OO^n)< m$. Therefore $X_{f_n}$ has a fast periodic trajectory and the function $f$ is fast.
\end{proof}

\begin{cor} \label{cor:capboule}
We have $c(B_r)=c(\mathcal{C}_{r,1})= \pi r^2$,

and for each bounded open set $\OO$ of $Z$ we have $0<c(\OO)<+\infty $.
\end{cor}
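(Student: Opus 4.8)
The plan is to deduce this corollary directly from Theorem \ref{thm:capellipsoide} together with the monotonicity established in Proposition \ref{prop:A1}. The two explicit capacity values are the easy part: taking $\boldsymbol{r}=(r,r,r,\ldots)$ (the constant sequence) gives $E(\boldsymbol{r})=B_r$, so Theorem \ref{thm:capellipsoide} yields $c(B_r)=\pi r^2$; taking $\boldsymbol{r}=(r,+\infty,+\infty,\ldots)$ gives $D(\boldsymbol{r})=\mathcal{C}_{r,1}$, and since $\inf_j r_j = r$ in this case as well, the same theorem yields $c(\mathcal{C}_{r,1})=\pi r^2$. Thus the first displayed equality is immediate.

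For the second statement, fix a bounded open set $\OO$. First I would establish the upper bound $c(\OO)<+\infty$: since $\OO$ is bounded, there exists $\rho>0$ with $\OO \subset B_\rho(Z)$, and by the monotonicity part (1) of Proposition \ref{prop:A1} together with $c(B_\rho)=\pi\rho^2$ we get $c(\OO)\leq c(B_\rho)=\pi\rho^2<+\infty$.

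The lower bound $c(\OO)>0$ is the step I expect to be the main obstacle, because it requires $\OO$ to contain a nontrivial ball — but an arbitrary open set need not contain a ball centered at the origin, so I cannot apply $c(B_r)=\pi r^2$ and monotonicity directly. The remedy is to use the translation invariance of the capacity proved in Proposition \ref{prop:invartranslation}. Since $\OO$ is open and nonempty, pick any point $\xi\in\OO$ and a radius $\delta>0$ such that the ball $B_\delta(\xi)=\xi+B_\delta$ of center $\xi$ is contained in $\OO$. Then by monotonicity $c(\OO)\geq c(\xi+B_\delta)$, and by translation invariance $c(\xi+B_\delta)=c(B_\delta)=\pi\delta^2>0$. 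Combining the two bounds gives $0<\pi\delta^2\leq c(\OO)\leq \pi\rho^2<+\infty$, which completes the proof.

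One subtlety to watch in writing this up is the degenerate case $\OO=\emptyset$, which should either be excluded by convention or handled by noting the statement is about nonempty bounded open sets; assuming $\OO\neq\emptyset$ the choice of $\xi$ and $\delta$ above is legitimate. No further estimates are needed, as all the analytic content has already been absorbed into Theorem \ref{thm:capellipsoide} and the invariance propositions.
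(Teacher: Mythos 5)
Your proof is correct and is exactly the derivation the paper intends (the corollary is stated without proof as an immediate consequence of Theorem \ref{thm:capellipsoide}, Proposition \ref{prop:A1} and Proposition \ref{prop:invartranslation}): identify $B_r$ with $E(r,r,\ldots)$ and $\mathcal{C}_{r,1}$ with $D(r,+\infty,\ldots)$, then sandwich a general bounded open $\OO$ between a translated small ball and a large ball. Your remark that the lower bound genuinely requires the translation invariance of Proposition \ref{prop:invartranslation} is exactly the right point of care.
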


The essential property of the capacity $c$ is its invariance with respect to the flow maps of PDEs satisfying assumptions (H1), (H2) and (H3). In fact the non-squeezing theorem \ref{thm:nonsqueezing} is a consequence of the following result.

\begin{thm} \label{thm:invarc}
Let $\Phi_T$ the flow of an equation (\ref{eqn:EDP}) satisfying the assumptions (H1), (H2) and (H3). For any open set $\OO$ of $Z$ we have 
$$c(\Phi_T(\OO))=c(\OO) .$$
\end{thm}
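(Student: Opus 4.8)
The plan is to factorise the flow by Lemma~\ref{lem:decompoPhi} and reduce the invariance of $c$ under $\Phi_T$ to the invariance under its three elementary factors: two of them preserve $c$ exactly, and the third is an arbitrarily small symplectic perturbation. Since $\Phi_T$ is a symplectic diffeomorphism whose inverse is the flow of an equation of the same type (time reversal), it suffices to prove the single inequality $c(\Phi_T(\OO))\geq c(\OO)$ for every open $\OO$; applying it to $\Phi_T(\OO)$ and the time-reversed flow then yields the reverse inequality, hence equality. We may assume $\OO$ bounded (the general case following by exhaustion together with (H2)), so fix $R$ with $\OO\subset B_R(Z)$ and $\eps>0$, and write $\Phi_T=e^{TJA}(I+\widetilde{\Phi}_\eps)(I+\widetilde{\Phi}_N)$ on $B_R(Z)$ as in (\ref{eqn:decompoPhi}).

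First I would dispose of the two exact factors. The map $I+\widetilde{\Phi}_N$ has, by construction, the block form $F(z^N+z_N)=F^N(z^N)+z_N$ with $F^N$ a symplectic diffeomorphism of $Z^N$ (on $Z_N$ the dynamics of (\ref{eqn:EDPDF}) is the purely linear $\dot v_N=JAv_N$), so Lemma~\ref{lem:invarcdf} gives $c((I+\widetilde{\Phi}_N)(\OO))=c(\OO)$. The linear flow $e^{TJA}$ acts on each plane $\mathrm{Span}(\vphi_j^+,\vphi_j^-)$ as a rotation, since $JA\vphi_j^+=-\lambda_j\vphi_j^-$ and $JA\vphi_j^-=\lambda_j\vphi_j^+$; it is therefore an orthogonal symplectic operator that commutes with every projector $\Pi^n$ and preserves each $Z^n$. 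Consequently $f\mapsto f\circ e^{-TJA}$ preserves $m$-admissibility (level sets, boundedness and distances are all preserved by an isometry) and, being symplectic on each $Z^n$, conjugates $X_{f_n}$ to $X_{(f\circ e^{-TJA})_n}$, so that fast trajectories correspond; hence $c(e^{TJA}\OO)=c(\OO)$. Writing $\OO'=(I+\widetilde{\Phi}_N)(\OO)$, these two facts reduce the theorem to comparing $c((I+\widetilde{\Phi}_\eps)(\OO'))$ with $c(\OO')=c(\OO)$.

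The genuine difficulty is the small perturbation $G_\eps=I+\widetilde{\Phi}_\eps$, which mixes the coordinate subspaces and so escapes Lemma~\ref{lem:invarcdf}; only its size is available. Fix $m<c(\OO)$ and a non-fast $m$-admissible $f_0$ on $\OO$, with $d_0=d(f_0)>0$. Transporting by the map $I+\widetilde{\Phi}_N$, which is bi-Lipschitz with constant $L$ bounded independently of $N$ (by (H3)), produces a non-fast $m$-admissible function $g=f_0\circ(I+\widetilde{\Phi}_N)^{-1}$ on $\OO'$ with $d(g)\geq d_0/L$. From $\|\widetilde{\Phi}_\eps\|\leq\eps$, and the same bound for $G_\eps^{-1}$ on the bounded set supplied by (H2), one gets the inclusion $\{z:B_\eps(z)\subset\OO'\}\subset G_\eps(\OO')$; hence for $\eps<d_0/(2L)$ the restriction of $g$ to this inner neighbourhood is still $m$-admissible and non-fast, because its support stays at distance $\geq d(g)-\eps>0$ from the new boundary by Remark~\ref{rmq:d(f)} and the finite-dimensional vectorfields are unchanged there. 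By definition of $c$ this forces $c(\{z:B_\eps(z)\subset\OO'\})\geq m$, and monotonicity (Proposition~\ref{prop:A1}) together with $c(\Phi_T(\OO))=c(G_\eps(\OO'))$ gives $c(\Phi_T(\OO))\geq m$ for every $m<c(\OO)$, that is $c(\Phi_T(\OO))\geq c(\OO)$. I expect the main obstacle to be precisely this control of $G_\eps$, namely establishing the inner-neighbourhood inclusion with a uniform-in-$N$ Lipschitz constant so that the admissibility of the transported witness survives the perturbation; once that is secured, the symmetry argument applied to the inverse flow closes the proof.
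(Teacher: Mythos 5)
Your overall architecture matches the paper's: reduce to a one-sided inequality by time reversal, factor $\Phi_T=e^{TJA}(I+\widetilde{\Phi}_\eps)(I+\widetilde{\Phi}_N)$ via Lemma~\ref{lem:decompoPhi}, dispose of $I+\widetilde{\Phi}_N$ by Lemma~\ref{lem:invarcdf} and of $e^{TJA}$ by isometry, and absorb the $\eps$-perturbation using the margin $d(f)$. The difference is the direction in which you transport the test function: you push a non-fast admissible witness \emph{forward} from the source through $I+\widetilde{\Phi}_N$ first, whereas the paper pulls an admissible function \emph{back} from the target, through $e^{-TJA}$ first (which preserves $d(f)$ exactly), then through $(I+\widetilde{\Phi}_\eps)^{-1}$, and never transports the function through $I+\widetilde{\Phi}_N$ at all --- that factor is handled purely at the level of capacities of domains, after which only the fast \emph{trajectory} (which lives in the support, where all the functions agree) is pushed forward. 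Your ordering creates two obligations the paper's ordering avoids, and both are genuine gaps as written.

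First, you need $d(g)\geq d_0/L$ with $L$ \emph{independent of $N$}, since you must then choose $N$ so that $\eps(N)<d_0/(2L)$; otherwise the choice of $N$ is circular. You assert this uniform bi-Lipschitz bound for $I+\widetilde{\Phi}_N$ ``by (H3)'', but (H3) is a one-sided Lipschitz estimate for the nonlinear part of the \emph{full} flow $\widetilde{\Phi}$, and says nothing directly about the truncated flows $\widetilde{\Phi}_N$, nor does it give a lower Lipschitz bound. (A patch exists --- combine (H3) for the inverse flow $\Phi^{-1}$ with the uniform estimate $\|\widetilde{\Phi}-\widetilde{\Phi}_N\|\leq C\eps(N)$ from the proof of Lemma~\ref{lem:decompoPhi} to get $\mathrm{dist}\bigl(F_N(\{f_0<m\}),F_N(\partial\OO)\bigr)\geq d_0/(1+C'')-2C\eps(N)$ --- but this argument is not in your proposal and is not ``by (H3)''.) Second, the inclusion $\{z:B_\eps(z)\subset\OO'\}\subset (I+\widetilde{\Phi}_\eps)(\OO')$ does not follow from the pointwise bound $\|\widetilde{\Phi}_\eps\|\leq\eps$ alone: in an infinite-dimensional Hilbert space you cannot invoke degree theory, and the bound is only available on $(I+\widetilde{\Phi}_N)(B_R)$, not globally, so showing that a given interior point is actually attained requires a boundary-displacement plus connectedness argument (or, better, the paper's device: extend the function by $m$ and restrict to the image domain, using that $\partial\OO_2=(I+\widetilde{\Phi}_\eps)(\partial\OO_1)$ lies within $\eps$ of $\partial\OO_1$, so admissibility survives when $\eps<\tfrac12 d$). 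Your treatment of the remaining pieces --- the block form of $I+\widetilde{\Phi}_N$, the rotation structure of $e^{TJA}$ and its compatibility with the $Z^n$, and the transfer of (non-)fastness via the support where the vector fields coincide --- is correct and consistent with the paper.
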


\begin{proof}
Let us denote $\Phi=\Phi_T$ and $\mathcal{Q}=\Phi(\OO)$. One easily checks that $\Phi^{-1}$ satisfies (H1), (H2) and (H3), therefore it is sufficient to prove that $c(\mathcal{Q})\leq c(\OO)$.

Take any $m>c(\OO)$ and any $f$ $m$-admissible in $\mathcal{Q}$. We want to prove that $f$ is fast.

Since $f$ is $m$-admissible there exists $R>0$ such that $\mbox{supp}f \subset B_R$. Define $R_1=R+d(f)$, $\mathcal{Q}' =\mathcal{Q} \cap B_{R' }$ and $\OO' =\Phi^{-1}(\mathcal{Q}' )$. By assumption $\OO' $ is bounded, hence there exists $R' $ such that $\OO' \subset B_{R' }$. Moreover we clearly have $\OO' \subset \OO$, thus by proposition \ref{prop:A1} 
\begin{equation}
\label{eqn:OO'}
c(\OO' ) \leq c(\OO) .
\end{equation}

We apply lemma \ref{lem:decompoPhi} with $N$ so large that $\varepsilon<\frac{1}{2}d(f)$, and we use the notations of the lemma \ref{lem:decompoPhi} :  $\Phi=e^{TJA}(I+\widetilde{\Phi}_\varepsilon)(I+\widetilde{\Phi}_N)$. We denote $\OO_1$ and $\OO_2$ the intermediate domains which arrise from the decomposition
$$\OO' \xrightarrow{I+\widetilde{\Phi}_N} \OO_1 \xrightarrow{I+\widetilde{\Phi}_\varepsilon} \OO_2 \xrightarrow{e^{TJA}} \mathcal{Q}' .$$
We also denote
$$f_2={\left. \left( f \circ e^{TJA} \right) \right|}_{\OO_2} .$$

Observe that $f_2$ is $m$-admissible on $\OO_2$. Indeed $f$ is $m$-admissible on $\mathcal{Q}$ and also on $\mathcal{Q}' $ (by definition of $\mathcal{Q}' $). Since $e^{tJA}$ is an isometry, $f_2$ is $m$-admissible. 

Then, we extend $f_2$ as $m$ outside $\OO_2$, and we denote $\widetilde{f}$ its restriction to $\OO_1$. By (\ref{eqn:decompoPhi2}) the $\varepsilon$-neighbourhood of $\partial \OO_1$ is contained in the $2\varepsilon$-neighbourhood of $\partial \OO_2$. Since $\varepsilon<\frac12 d(f)$, we deduce that $\widetilde{f}$ equals $m$ on a neighbourhood of $\partial \OO_1$. Moreover $\widetilde{f}^{-1}(0)=f_2^{-1}(0) \subset \OO_1 \cap \OO_2$. Indeed by remark \ref{rmq:d(f)} 
$$\mbox{dist}(f_2^{-1}(0),\partial \OO_2) \geq d(f)$$
$$\mbox{and   } \mbox{dist}(\partial \OO_1,\partial \OO_2)\leq \frac12 d(f) .$$

Hence $\widetilde{f}$ is $m$-admissible on $\OO_1$.

Using lemma \ref{lem:invarcdf} and (\ref{eqn:OO'}), we deduce that
$$c(\OO_1)=c\left((I+\widetilde{\Phi}_N)(\OO' ) \right)=c(\OO' ) \leq c(\OO) < m .$$
Hence $\widetilde{f}$ is $m$-admissible on $\OO_1$ and $c(\OO_1)< m$, thus $\widetilde{f}$ is fast. So for $n$ sufficiently large, the vectorfield $X_{\widetilde{f}_n}$ (where $\widetilde{f}_n=\widetilde{f}|_{\OO_1^n}$) has a fast solution. By lemma \ref{lem:solrapide} this solution lies in supp$\widetilde{f}$ and by remark \ref{rmq:d(f)} supp$\widetilde{f}=$supp$f_2$, so this solution is also a fast solution of $X_{f_2^n}$ (where $f_2^n={f_2}|_{\OO_2^n}$). Hence $f_2$ is fast too. Finally $f$ is also fast ($f_2={\left. \left( f \circ e^{TJA} \right) \right|}_{\OO_2}$).
\end{proof}

\section{Application to the BBM equation}
\label{sec:BBMeq}
In this section we prove that the BBM equation
\begin{equation}
\left\{ \begin{array}{l}
u_t+u_x+uu_x-u_{xxt}=0, \quad x\in \T \\
u(0,x)=u_0(x)
\end{array}
\right.
\label{eq:bbm}
\end{equation}
is globally well-posed in $H^s(\T)$ for $s\geq 0$ (we will follow the proof given in \cite{BT} for $x\in \R$) and has the non-squeezing property (theorem \ref{thm:BBMnonsqueezing}).

\subsection{Bilinear estimates}

We start by two helpful inequalities. \\ Let $\vphi(k)=\frac{k}{1+k^2}$ and $\vphi(D)$ the Fourier multiplier operator defined by $\widehat{\vphi(D)u}(k)=\vphi(k) \widehat{u}(k)$.

 \begin{lem}
\label{lem:bilin1}
Let $u \in H^r(\T)$ and $v\in H^{r'}(\T)$ with $0 \leq r \leq s$, $0 \leq r' \leq s$ and $0\leq 2s-r-r' <1/4$. Then 
$$\nl \vphi(D)(uv) \nr_{H^s} \leq C_{r,r',s} \nl u \nr_{H^r} \nl v \nr_{H^{r'}} $$
\end{lem}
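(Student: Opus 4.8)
The plan is to pass to Fourier coefficients and reduce the statement to a Sobolev multiplication estimate. On $\T$ one has $\widehat{uv}(k)=\sum_{k_1+k_2=k}\widehat u(k_1)\widehat v(k_2)$ and $\nl w\nr_{H^s}^2=\sum_k\langle k\rangle^{2s}|\widehat w(k)|^2$ with $\langle k\rangle=(1+k^2)^{1/2}$. Since
\[
\langle k\rangle^{2s}\vphi(k)^2=\frac{k^2}{\langle k\rangle^4}\,\langle k\rangle^{2s}\leq\langle k\rangle^{2(s-1)},
\]
and $\vphi(0)=0$ annihilates the mean, it suffices to prove the multiplication estimate $\nl uv\nr_{H^{s-1}}\leq C\,\nl u\nr_{H^r}\nl v\nr_{H^{r'}}$. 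The first thing I would record is the consequence of the hypotheses used throughout: from $r,r'\leq s$ and $2s-r-r'<1/4$ one gets $r>s-1/4$ and $r'>s-1/4$; in particular $r,r'>s-1/2$ and, for $s\geq 1/2$, $r+r'>2s-1/4\geq 3/4>1/2$.

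When $s<1/2$ the estimate is immediate: $\langle\cdot\rangle^{s-1}\in\ell^2(\Z)$, and since $|\widehat{uv}(k)|\leq(|\widehat u|*|\widehat v|)(k)\leq\nl u\nr_{L^2}\nl v\nr_{L^2}$ pointwise, one obtains $\nl uv\nr_{H^{s-1}}\leq\nl\langle\cdot\rangle^{s-1}\nr_{\ell^2}\nl u\nr_{L^2}\nl v\nr_{L^2}$, with $\nl u\nr_{L^2}\leq\nl u\nr_{H^r}$ because $r\geq0$. For the range $s\geq1/2$ I set $a(k)=\langle k\rangle^r\widehat u(k)$ and $b(k)=\langle k\rangle^{r'}\widehat v(k)$, so that $\nl a\nr_{\ell^2}=\nl u\nr_{H^r}$, $\nl b\nr_{\ell^2}=\nl v\nr_{H^{r'}}$, and the claim reads
\[
\left\| \langle k\rangle^{s-1}\sum_{k_1}\frac{a(k_1)}{\langle k_1\rangle^{r}}\,\frac{b(k-k_1)}{\langle k-k_1\rangle^{r'}}\right\|_{\ell^2_k}\leq C\nl a\nr_{\ell^2}\nl b\nr_{\ell^2}.
\]

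I would split the inner sum according to $|k-k_1|\leq|k_1|$ and its symmetric counterpart (handled by exchanging $r\leftrightarrow r'$). On $|k-k_1|\leq|k_1|$, where $\langle k\rangle\leq 2\langle k_1\rangle$, I distinguish two situations. In the non-resonant one, $\langle k\rangle\sim\langle k_1\rangle$, so the kernel is $\lesssim\langle k_1\rangle^{s-1-r}\langle k-k_1\rangle^{-r'}$ and the sum is a convolution $\alpha*\beta$ with $\alpha=\langle\cdot\rangle^{s-1-r}|a|$, $\beta=\langle\cdot\rangle^{-r'}|b|$; Young's inequality $\ell^1*\ell^2\hookrightarrow\ell^2$ bounds it by $\nl\alpha\nr_{\ell^1}\nl\beta\nr_{\ell^2}$, and $\nl\alpha\nr_{\ell^1}\leq\nl\langle\cdot\rangle^{s-1-r}\nr_{\ell^2}\nl a\nr_{\ell^2}<\infty$ exactly because $r>s-1/2$. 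In the resonant one, $\langle k\rangle\ll\langle k_1\rangle\sim\langle k-k_1\rangle$, I use $\langle k\rangle^{s-1}\leq1$ when $1/2\leq s\leq1$ and $\langle k\rangle^{s-1}\lesssim\langle k_1\rangle^{s-1}$ when $s\geq1$ (which only improves the decay), and bound the residual kernel by $\langle\cdot\rangle^{-r-r'}$ on the large frequency, so the sum is $(\langle\cdot\rangle^{-r-r'}|a|)*|b|$, controlled in $\ell^2$ by $\nl\langle\cdot\rangle^{-r-r'}\nr_{\ell^2}\nl a\nr_{\ell^2}\nl b\nr_{\ell^2}$, finite because $r+r'>1/2$.

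The main obstacle is exactly this resonant high--high interaction producing a low output: there the weight $\langle k\rangle^{s-1}$ carried by the product gives no help, and convergence of the frequency sum must come entirely from the two input weights $\langle\cdot\rangle^{-r}$ and $\langle\cdot\rangle^{-r'}$. This is where the gap hypothesis is used: $2s-r-r'<1/4$ with $r,r'\leq s$ forces $r,r'>s-1/4$, so the combined decay $\langle\cdot\rangle^{-(r+r')}$ is square-summable ($r+r'>1/2$) and each $\langle\cdot\rangle^{s-1-r}$ is summable ($r>s-1/2$), which is precisely what the two Young estimates demand. Collecting the two situations and adding the two symmetric halves gives the multiplication estimate with a constant $C_{r,r',s}$ depending only on the exponents, whence the lemma. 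I would take care over the borderline bookkeeping at $s=1/2$, where $\langle\cdot\rangle^{s-1}=\langle\cdot\rangle^{-1}\notin\ell^2$ so the shortcut of the easy regime is unavailable and the $\ell^2$-norm argument above must be used; the strict inequalities $r,r'>s-1/4$ leave room to spare, so no genuine difficulty arises.
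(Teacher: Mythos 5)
Your argument is correct, but it takes a genuinely different route from the paper's. The paper argues by duality: it pairs $\langle k\rangle^{s}\vphi(k)\widehat{uv}(k)$ against an arbitrary $\widehat w\in\ell^2$, absorbs the Sobolev weights into $f=\langle\cdot\rangle^{r}\widehat u$, $g=\langle\cdot\rangle^{r'}\widehat v$ and $h=k\langle\cdot\rangle^{-2(1+r+r'-2s)}\overline{\widehat w}$, checks that the leftover kernel $\langle k\rangle^{-3s+2r+2r'}\langle l\rangle^{-r}\langle k-l\rangle^{-r'}$ is uniformly bounded (this is where $r,r'\leq s$ is used), and closes with $\|f\|_{\ell^2}\|g\|_{\ell^2}\|h\|_{\ell^1}$; the hypothesis $2s-r-r'<1/4$ enters exactly once, to make $\|k(1+k^2)^{-(1+r+r'-2s)}\|_{\ell^2_k}$ finite. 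You avoid duality altogether, reduce to the product estimate $\|uv\|_{H^{s-1}}\lesssim\|u\|_{H^r}\|v\|_{H^{r'}}$, and run a high--low/high--high frequency decomposition with $\ell^1*\ell^2\hookrightarrow\ell^2$ in each region, feeding in the hypothesis through the derived inequalities $r,r'>s-1/2$ and $r+r'>1/2$. The paper's proof is shorter and treats all frequency interactions at once; yours is more in the paraproduct spirit, isolates where each input weight is needed, and in fact proves slightly more (e.g.\ it covers $r=r'=3/8$, $s=1/2$, for which $2s-r-r'=1/4$ falls outside the lemma's hypothesis). One small bookkeeping point: in your high--high case with $s\geq1$, after replacing $\langle k\rangle^{s-1}$ by $\langle k_1\rangle^{s-1}$ the residual kernel on the large frequency is $\langle k_1\rangle^{s-1-r-r'}$, not $\langle k_1\rangle^{-r-r'}$, so the summability actually required there is $r+r'>s-1/2$; this also follows from $r+r'>2s-1/4$, so nothing breaks, but the parenthetical ``only improves the decay'' is not literally correct.
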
 

\begin{proof}
We want to prove 
$$\nl \left\langle k\right\rangle^s \frac{k}{1+k^2} \widehat{uv}(k) \nr_{\ell^2_k} \leq C \nl u \nr_{H^r} \nl v \nr_{H^{r'}} .$$
By duality it is sufficient to prove 
$$\left\langle  \left\langle k\right\rangle^s \frac{k}{1+k^2} \widehat{uv},\widehat{w} \right\rangle_{\ell^2} \leq C \nl u \nr_{H^r} \nl v \nr_{H^{r'}} \nl w \nr_{L^2} ,$$
that is 
$$I=\sum_{k \in \Z}  k \left\langle k \right\rangle^{s-2}  \widehat{uv}(k)  \overline{\widehat{w}}(k) \leq C \nl u \nr_{H^r} \nl v \nr_{H^{r'}} \nl w \nr_{L^2} .$$
Let $f(k)=\left\langle k\right\rangle^r \widehat{u}(k) $, $g(k)= \left\langle k\right\rangle^{r'} \widehat{v}(k)$ and $h(k)=k \left\langle k\right\rangle^{-2(1+r+r'-2s)} \overline{\widehat{w}}(k) $. Since
$$\widehat{uv}(k)=\sum_{l\in\Z} \widehat{u}(l) \widehat{v}(k-l)$$
we have
$$I=\sum_{k \in \Z} \sum_{l \in \Z} \frac{\left\langle k\right\rangle^{-3s+2r+2r'}}{\left\langle l\right\rangle^r \left\langle k-l\right\rangle^{r'}} f(l) g(k-l) h(k) .$$
We have $-2s+r+r' \leq 0$ and $-s+r\leq 0$ and  $-s+r' \leq 0$ so \\
$-3s+2r+2r' = -2s+r+r' +(- s + r') +r \leq r$ and $-3s+2r+2r' \leq r'$. \\
Hence $\dfrac{\left\langle k\right\rangle^{-3s+2r+2r'}}{\left\langle l\right\rangle^r \left\langle k-l\right\rangle^{r'}}$ is bounded for $k$ and $l$ in $\Z$.
Then (by Cauchy-Schwarz inequality and Young's inequality)
\begin{align*}
I & \lesssim \sum_{k \in \Z} \sum_{l \in \Z} f(l) g(k-l) h(k) \\
  & \lesssim \nl f \nr_{\ell^2} \nl g * h(- \cdot) \nr_{\ell^2} \\
  & \lesssim \nl f \nr_{\ell^2} \nl g \nr_{\ell^2} \nl h \nr_{\ell^1} \\
  & \lesssim \nl u \nr_{H^r} \nl v \nr_{H^{r'}} \nl w \nr_{L^2} \nl \frac{k}{(1+k^2)^{1+r+r'-2s}} \nr_{\ell_k^2} .
\end{align*}

Since $2s-r-r'<1/4$ we have $1+r+r'-2s>3/4$. Hence
 $$\nl \frac{k}{(1+k^2)^{1+r+r'-2s}} \nr_{\ell_k^2} <+\infty .$$
\end{proof}

In subsection \ref{subsec:H1} we will use this lemma in the particular case $r=r'=s\geq0$, that is 
$$\nl \vphi(D)(uv) \nr_{H^s} \leq C_{s} \nl u \nr_{H^s} \nl v \nr_{H^s} $$
whereas in subsection \ref{subsec:H2} and \ref{subsec:H3} we will need the general case $0\leq r,r' < s$.

\begin{lem}
\label{lem:bilin2}
Let $u \in H^r(\T)$ and $v\in H^s(\T)$ with $0\leq s \leq r$ and $r>\frac12$, then 
$$\nl \vphi(D)(uv) \nr_{H^{s+1}} \leq C \nl u \nr_{H^r} \nl v \nr_{H^s} .$$
\end{lem}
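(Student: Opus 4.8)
The plan is to reduce the claim to a standard product (multiplier) estimate in Sobolev spaces and then prove that estimate by a Littlewood--Paley-type frequency splitting on the Fourier side, following the duality scheme already used for Lemma \ref{lem:bilin1}.

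First I would observe that the smoothing multiplier $\vphi(D)$ exactly compensates the extra derivative. Since $\langle k\rangle=(1+k^2)^{1/2}$ and $|k|\le\langle k\rangle$, one has the pointwise bound
$$\langle k\rangle^{s+1}\,\frac{|k|}{1+k^2}\le\langle k\rangle^{s+1}\langle k\rangle^{-1}=\langle k\rangle^{s}.$$
Hence it suffices to establish the product estimate $\|uv\|_{H^s}\lesssim\|u\|_{H^r}\|v\|_{H^s}$, i.e.\ that multiplication by an $H^r$-function with $r>\tfrac12$ is bounded on $H^s$ for $0\le s\le r$. As in the proof of Lemma \ref{lem:bilin1} I would work by duality, reducing to the bound
$$I=\sum_{k,l}\langle k\rangle^{s}\,|\widehat u(l)|\,|\widehat v(k-l)|\,|\widehat w(k)|\lesssim\|u\|_{H^r}\|v\|_{H^s}\|w\|_{L^2},$$
where $w\in L^2$ is the dual variable.

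The heart of the argument is to distribute the weight $\langle k\rangle^s$ onto the two factors. Using $\langle k\rangle\le\langle l\rangle+\langle k-l\rangle$, I would split the sum over $l$ into the region $B=\{\langle k-l\rangle\ge\langle l\rangle\}$, where the $v$-frequency dominates, and the region $A=\{\langle l\rangle>\langle k-l\rangle\}$, where the $u$-frequency dominates. On $B$ one has $\langle k\rangle^s\lesssim\langle k-l\rangle^s$, so the corresponding piece is dominated by the convolution $|\widehat u|\ast(\langle\cdot\rangle^s|\widehat v|)$; by Young's inequality ($\ell^1\ast\ell^2\to\ell^2$) this is controlled by $\||\widehat u|\|_{\ell^1}\,\|v\|_{H^s}$, and $\||\widehat u|\|_{\ell^1}\le\|u\|_{H^r}\,(\sum_l\langle l\rangle^{-2r})^{1/2}\lesssim\|u\|_{H^r}$ precisely because $r>\tfrac12$.

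The region $A$ is the only delicate point, and it is where the hypothesis $s\le r$ enters. Here $\langle k\rangle^s\lesssim\langle l\rangle^s$, and the naive bound would ask for $\langle\cdot\rangle^s|\widehat u|\in\ell^1$, which need not hold when $s\le\tfrac12$. Instead I would exploit the constraint $\langle l\rangle>\langle k-l\rangle$ together with $s-r\le0$ to transfer the excess weight onto the $v$-frequency:
$$\langle l\rangle^s=\langle l\rangle^{s-r}\langle l\rangle^{r}\le\langle k-l\rangle^{s-r}\langle l\rangle^{r}.$$
This turns the piece into the convolution $(\langle\cdot\rangle^{r}|\widehat u|)\ast(\langle\cdot\rangle^{s-r}|\widehat v|)$, which by Young's inequality ($\ell^2\ast\ell^1\to\ell^2$) is bounded by $\|u\|_{H^r}\,\|\langle\cdot\rangle^{s-r}|\widehat v|\|_{\ell^1}$; and again by Cauchy--Schwarz $\|\langle\cdot\rangle^{s-r}|\widehat v|\|_{\ell^1}\le\|v\|_{H^s}(\sum_m\langle m\rangle^{-2r})^{1/2}\lesssim\|v\|_{H^s}$ since $r>\tfrac12$. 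Combining the two regions gives $I\lesssim\|u\|_{H^r}\|v\|_{H^s}\|w\|_{L^2}$, and with the reduction above the lemma follows. I expect the weight-transfer step in region $A$ to be the main obstacle, the condition $r>\tfrac12$ being used (twice) exactly to secure the $\ell^1$-summability of the low-regularity Fourier coefficients.
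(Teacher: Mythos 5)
Your argument is correct. The opening reduction --- absorbing the extra derivative into the smoothing multiplier via $\langle k\rangle^{s+1}|\vphi(k)|\le\langle k\rangle^{s}$, so that everything rests on the product estimate $\nl uv\nr_{H^s}\lesssim\nl u\nr_{H^r}\nl v\nr_{H^s}$ --- is exactly the paper's first step. The two proofs diverge on how that product estimate is handled: the paper simply quotes the standard fact that, for $r>\frac12$ and $0\le s\le r$, the elements of $H^r(\T)$ are multipliers on $H^s(\T)$, and offers no further justification, whereas you prove this fact from scratch by duality, splitting the frequency sum according to whether $\langle l\rangle$ or $\langle k-l\rangle$ dominates and applying Young's inequality ($\ell^1\ast\ell^2\to\ell^2$) in each region. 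Your weight transfer $\langle l\rangle^{s}\le\langle k-l\rangle^{s-r}\langle l\rangle^{r}$ on the region $\langle l\rangle>\langle k-l\rangle$ is the right device for $s\le\frac12$, and you correctly identify that $r>\frac12$ enters only to make $\sum_m\langle m\rangle^{-2r}$ converge (twice, once in each region). The trade-off is clear: the paper's proof is a three-line computation resting on a cited multiplier theorem, while yours is self-contained and in the same spirit as the duality-plus-Young scheme already used for Lemma \ref{lem:bilin1}, at the cost of some extra length. Both are valid.
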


\begin{proof}
Since $r>\frac12$ and $r\geq s \geq 0$, the elements of $H^r(\T)$ are multipliers in $H^s(\T)$, which is to say 
$$\nl uv \nr_{H^s} \lesssim \nl u \nr_{H^r} \nl v \nr_{H^s} .$$
Hence
\begin{align*}
\nl \vphi(D)(uv) \nr_{H^{s+1}} & = \nl \frac{\left\langle k \right\rangle^{s+1} k}{\left\langle k \right\rangle^2} \widehat{uv} \nr_{\ell_k^2} \\
& \leq \nl \left\langle k \right\rangle^s \widehat{uv} \nr_{\ell_k^2} \\
&= \nl uv \nr_{H^{s}} \\
&\lesssim \nl u \nr_{H^r} \nl v \nr_{H^s} .
\end{align*}
\end{proof}

\subsection{Hamiltonian formalism for BBM equation}

Recall that BBM equation reads 
$$u_t+u_x+uu_x-u_{txx}=0 .$$
Let us prove that BBM equation is a hamiltonian equation (\ref{eqn:EDP}). 

First BBM can be written
$$u_t=-\partial_x (1-\partial_x^2)^{-1}(u+\frac{u^2}{2}) .$$

Denote $Z=H^{1/2}_0(\T)=\left\lbrace u \in H^{1/2} / \int_{\T}  u = 0 \right\rbrace $ with the following norm 
$$\nl u \nr_Z = \sum_{k \in \Z \backslash \{ 0 \} } \frac{1+k^2}{k}(a_k^2 + b_k^2) $$
where $a_k$ and $b_k$ are the (real) Fourier coefficients of $u$.

Consider the Hilbert basis of Z given by 
$$ \vphi_n^{+}(x)=\sqrt{\frac{n}{\pi(n^2+1)}} \cos(nx), \qquad \vphi_n^{-}(x)=\sqrt{\frac{n}{\pi(n^2+1)}} \sin(nx) .$$
We have $Z_+=H^{1/2+\eps}_0 < H^{1/2}_0 < H^{1/2-\eps}_0=Z_-$, where $\eps>0$ will be fixed later.

Define 
$$H(u)=\int_{\T} \! \left( \frac{u(x)^2}{2} + \frac{u(x)^3}{6} \right) dx ,$$
we have 
$$\nabla_{\!\! L^2} H(u)= u + \frac{u^2}{2} .$$

Assume 
$$u(t)= \sum_{n} p_n(t) \vphi_n^++q_n(t) \vphi_n^-$$
and 
$$\nabla_{\!\! L^2} H(u)=  \sum_{n} \alpha_n \vphi_n^++\beta_n \vphi_n^- .$$
Denoting $\widetilde{H}(p,q)=H(\sum_{n} p_n(t) \vphi_n^++q_n(t) \vphi_n^-)$ we deduce that
$$\frac{\partial \widetilde{H} }{\partial p_n}= \left\langle \nabla_{\!\! L^2} H(u), \vphi_n^+ \right\rangle_{L^2} = \alpha_n \nl \vphi_n^+ \nr_{L^2}^2= \frac{n \alpha_n}{1+n^2}$$ 
and 
$$\frac{\partial \widetilde{H} }{\partial q_n}=\frac{n \beta_n}{1+n^2} .$$ 

Hence
\begin{align*}
\dot{u}=\sum_{n} \dot{p}_n \vphi_n^++\dot{q}_n \vphi_n^-&= (1-\partial_x^2)^{-1} \partial_x (- \nabla_{\!\! L^2} H(u)) \\
& = \sum_{n} \frac{-n \alpha_n}{1+n^2} \vphi_n^- + \frac{n \beta_n}{1+n^2} \vphi_n^+ 
\end{align*}
so
$$\left\{ \begin{array}{l} \dot{p}_n = \dfrac{n \beta_n}{1+n^2} = \dfrac{\partial \widetilde{H} }{\partial q_n} \\
            \dot{q}_n = \dfrac{- n \alpha_n}{1+n^2} = - \dfrac{\partial \widetilde{H} }{\partial p_n} 
          \end{array} \right.$$

That is $\dot{u}=J \nabla_{\!\! Z} H(u)$.

\subsection{Verification of (H1)}
\label{subsec:H1}

\subsubsection{Local well-posedness}

Recall that $\vphi(k)=\frac{k}{1+k^2}$, the equation (\ref{eq:bbm}) can be written in the form :
\begin{equation}
\left\{ \begin{array}{l}
iu_t=\vphi(D)u+\frac12 \vphi(D)u^2 \\
u(0,x)=u_0(x)
\end{array}
\right.
\label{eq:bbm2}
\end{equation}

Let $e^{-it\vphi(D)}$ be the unitary group defining the associated free evolution. That is, $e^{-it\vphi(D)}u_0$ solves the Cauchy problem 
\begin{equation}
\left\{ \begin{array}{l}
iu_t=\vphi(D)u \\
u(0,x)=u_0(x)
\end{array}
\right.
\label{eq:bbmlin}
\end{equation}
Then, (\ref{eq:bbm2}) may be rewritten as the integral equation 
$$u(t)=e^{-it\vphi(D)}u_0-\frac{i}2 \int_0^t \!\! e^{-i(t-\tau)\vphi(D)}\vphi(D)(u(\tau)^2) d\tau=\mathcal{A}(u)(t,\cdot) .$$

Let $X^s_T=C^0([-T,T],H^s(\T))$. The $H^s$ norm is clearly preserved by the free evolution, thus
\begin{equation}
\nl e^{-it\vphi(D)}u_0 \nr_{X^s_T} = \nl u \nr_{H^s} .
\label{eq:Hsconservation}
\end{equation}

\begin{thm}
\label{thm:bbmexistloc}
Let $s\geq0$. For any $u_0 \in H^s(\T)$, there exist a time $T$ (depending on $u_0$) and a unique solution $u \in X^s_T$ of (\ref{eq:bbm}). The maximal existence time $T_s$ has the property that
$$T_s \geq \frac1{4 C_s \nl u_0 \nr_{H^s}}$$
with $C_s$ the constant from lemma \ref{lem:bilin1} (in the special case $r=r'=s$).

Moreover, for $R>0$, let $T$ denote a uniform existence time for (\ref{eq:bbm}) with $u_0\in B_R(H^s(\T))$, then the map $\Phi:u_0 \mapsto u$ is real-analytic from $B_R(H^s(\T))$ to $X_T^s$.
\end{thm}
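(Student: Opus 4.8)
The plan is to prove Theorem~\ref{thm:bbmexistloc} by a standard contraction-mapping (Picard fixed-point) argument applied to the integral operator $\mathcal{A}$, with the bilinear estimate of Lemma~\ref{lem:bilin1} in the special case $r=r'=s$ providing the crucial nonlinear control. First I would fix $s\geq 0$ and $u_0\in H^s(\T)$, and work in the Banach space $X^s_T=C^0([-T,T],H^s(\T))$. The key point is that the free evolution $e^{-it\vphi(D)}$ is an isometry on $H^s$ (equation~(\ref{eq:Hsconservation})), so taking the $X^s_T$ norm of $\mathcal{A}(u)(t)$ and pulling the norm inside the time integral reduces everything to estimating $\nl\vphi(D)(u(\tau)^2)\nr_{H^s}$. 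By Lemma~\ref{lem:bilin1} with $r=r'=s$ this is bounded by $C_s\nl u(\tau)\nr_{H^s}^2$, hence
$$\nl \mathcal{A}(u)\nr_{X^s_T}\leq \nl u_0\nr_{H^s}+\tfrac12 T C_s \nl u\nr_{X^s_T}^2.$$
A similar computation for the difference $\mathcal{A}(u)-\mathcal{A}(v)$, using bilinearity of the product ($u^2-v^2=(u-v)(u+v)$) together with the same lemma, gives
$$\nl \mathcal{A}(u)-\mathcal{A}(v)\nr_{X^s_T}\leq \tfrac12 T C_s\left(\nl u\nr_{X^s_T}+\nl v\nr_{X^s_T}\right)\nl u-v\nr_{X^s_T}.$$

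Next I would restrict $\mathcal{A}$ to the closed ball $\{u\in X^s_T : \nl u\nr_{X^s_T}\leq 2\nl u_0\nr_{H^s}\}$ and choose $T$ so that $\mathcal{A}$ maps this ball into itself and is a strict contraction. Feeding $\nl u\nr_{X^s_T}\leq 2\nl u_0\nr_{H^s}$ into the two displayed estimates, both the self-mapping and contraction conditions are guaranteed as soon as $T C_s\nl u_0\nr_{H^s}$ is smaller than an absolute constant; solving the resulting inequality yields precisely the stated lower bound $T_s\geq \frac{1}{4C_s\nl u_0\nr_{H^s}}$ on the maximal existence time. The Banach fixed-point theorem then produces a unique fixed point $u=\mathcal{A}(u)$ in the ball, which is the desired solution of (\ref{eq:bbm}); uniqueness in all of $X^s_T$ follows from the difference estimate by a standard continuation/Gronwall argument.

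For the final real-analyticity assertion, I would observe that $\mathcal{A}:X^s_T\times H^s(\T)\to X^s_T$, viewed as a map of both the unknown $u$ and the data $u_0$, is a polynomial (degree two in $u$, affine in $u_0$) and hence real-analytic, with the bilinear estimate showing it is bounded on bounded sets. On the ball $B_R(H^s(\T))$ with uniform existence time $T$, the partial derivative $I-D_u\mathcal{A}$ at the solution is invertible (its norm deficit from the identity is controlled by $TC_s\nl u\nr$, which the choice of $T$ keeps below one), so the analytic implicit function theorem in Banach spaces yields that the solution map $u_0\mapsto u$ is real-analytic from $B_R(H^s(\T))$ to $X^s_T$.

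I expect the only genuine obstacle to be bookkeeping rather than conceptual: one must verify that the constant $C_s$ in the two nonlinear estimates is exactly the Lemma~\ref{lem:bilin1} constant so that the explicit bound $\frac{1}{4C_s\nl u_0\nr_{H^s}}$ comes out cleanly, and one must be slightly careful with the factor $\tfrac12$ from $\tfrac12\vphi(D)u^2$ and the symmetrization $u^2-v^2=(u-v)(u+v)$ when splitting the contraction estimate. The analyticity step requires only that $\mathcal{A}$ be a fixed bounded multilinear (here bilinear) expression, which is immediate, so the implicit function theorem applies without further analytic input beyond the invertibility of $I-D_u\mathcal{A}$ already guaranteed by the contraction bound.
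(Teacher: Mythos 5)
Your proposal is correct and follows essentially the same route as the paper: a contraction-mapping argument on a ball of radius $2\nl u_0\nr_{H^s}$ in $X^s_T$, with Lemma~\ref{lem:bilin1} (case $r=r'=s$) and the isometry property (\ref{eq:Hsconservation}) giving the self-mapping and contraction bounds and the time $T=\frac1{4C_s\nl u_0\nr_{H^s}}$, followed by the implicit function theorem applied to $\Lambda(u_0,v)=v-\mathcal{A}(v)$, whose $v$-derivative is $I+K$ with $\nl K\nr<1$. The only cosmetic difference is that the paper phrases the analyticity step via the auxiliary map $\Lambda$ rather than directly via $I-D_u\mathcal{A}$, which is the same computation.
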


\begin{proof}
Let $R=2\nl u_0 \nr_{H^s}$. For any $u \in B_R(X^s_T)$, by (\ref{eq:Hsconservation}) and lemma \ref{lem:bilin1} (with $r=r'=s$) we have
\begin{align*}
\nl \mathcal{A}(u) \nr_{X^s_T} & \leq \nl e^{-it\vphi(D)}u_0 \nr_{X^s_T} +\frac12 \nl \int_0^t \!\! e^{-i(t-\tau)\vphi(D)}\vphi(u(\tau)^2) d\tau \nr_{X^s_T} \\
& \leq \nl u_0\nr_{H^s} + \frac{C_s T}{2} \nl u \nr_{X^s_T}^2 \\
& \leq \nl u_0\nr_{H^s} + \frac{C_s T}{2} R^2 \\
& \leq R \quad \quad \quad \text{for $T = \frac2{C_sR}$}
\end{align*}
and for any $u,v \in B_R(X^s_T)$, by lemma \ref{lem:bilin1} (with $r=r'=s$) we have
\begin{align*}
\nl \mathcal{A}(u)-\mathcal{A}(v) \nr_{X^s_T} & \leq \frac{C_sT}{2} \nl u-v \nr_{X^s_T} \nl u+v \nr_{X^s_T} \leq C_sTR \nl u-v \nr_{X^s_T} . 
\end{align*}
Hence, $\mathcal{A}$ is a contraction mapping of $B_R(X^s_T)$ for $T=\frac{1}{2C_s R}=\frac{1}{4C_s \nl u_0 \nr_{H^s}}$. Thus $\mathcal{A}$ has a unique fixed point which is a solution of (\ref{eq:bbm}) on time interval $[-T,T]$.

Let us consider now the smoothness of $\Phi$. Let $\Lambda:H^s(\T) \times X^s_T \longrightarrow X^s_T$ be defined as
$$\Lambda(u_0,v)(t)=v(t)-e^{-it\vphi(D)}u_0-\frac{i}2 \int_0^t \!\! e^{-i(t-\tau)\vphi(D)}\vphi(D)(v(\tau)^2) d\tau .$$
Due to lemme \ref{lem:bilin1} (with $r=r'=s$), $\Lambda$ is a smooth map from $H^s(\T) \times X^s_T$ to $ X^s_T$. Let $u\in X_T^s$ be the solution of (\ref{eq:bbm}) with initial data $u_0\in H^s(\T)$, which is to say $\Lambda(u_0,u)=0$. Thus, the Fr\'echet derivative of $\Lambda$ with respect to the second variable is the linear map :
$$\Lambda'(u_0,u)(t)[h] = h- \int_0^t \!\! e^{-i(t-\tau)\vphi(D)}\vphi(D)(u(\tau)h(\tau)) d\tau .$$
Still by lemma \ref{lem:bilin1} we get 
$$\nl \int_0^t \!\! e^{-i(t-\tau)\vphi(D)}\vphi(D)(u(\tau)h(\tau)) d\tau \nr_{X^s_T} \leq C T \nl u \nr_{H^s} \nl h \nr_{H^s} .$$
So, for $T'$ sufficiently small (depending only on $\nl u \nr_{H^s}$), $\Lambda'(u_0,u)(t)$ is invertible since it is of the form $Id + K$ with 
$$\nl K \nr_{\mathcal{B}(X^s_{T'},X^s_{T'})} <1$$
where $\mathcal{B}(X^s_{T'},X^s_{T'})$ is the Banach space of bounded linear operators on $X^s_{T'}$. Thus $\Phi:B_R(H^s(\T)) \rightarrow X_{T}^s$ is real-analytic by Implicit Function Theorem.
\end{proof}

\subsubsection{Global well-posedness}

\begin{thm}
\label{thm:bbmexistglob}
The solution defined in theorem \ref{thm:bbmexistloc} is global in time.
\end{thm}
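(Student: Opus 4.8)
The plan is to globalize the local solutions of Theorem \ref{thm:bbmexistloc} by producing, for every $T>0$, an a priori bound on $\sup_{0\le t\le T}\|u(t)\|_{H^s}$; this rules out finite-time blow-up because the local existence time $T_s\ge \frac{1}{4C_s\|u_0\|_{H^s}}$ depends only on the $H^s$-norm, so a uniform bound lets one re-apply the local theory indefinitely. The backbone is conservation of the energy $E(u)=\int_{\T}(u^2+u_x^2)\,dx=\|u\|_{H^1}^2$. I would first establish this by multiplying the equation by $u$ and integrating over $\T$: the terms $\int uu_x$ and $\int u^2u_x$ vanish as exact derivatives, while $\int uu_t=\tfrac12\tfrac{d}{dt}\int u^2$ and $-\int uu_{txx}=\int u_xu_{tx}=\tfrac12\tfrac{d}{dt}\int u_x^2$, so $\tfrac{d}{dt}E(u)=0$. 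This computation is legitimate for $H^1$ data because the flow of Theorem \ref{thm:bbmexistloc} is continuous (indeed real-analytic) into $X^1_T$, and it immediately yields global well-posedness at $s=1$ with the uniform bound $\|u(t)\|_{H^1}=\|u_0\|_{H^1}$.

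For $s>1$ I would bootstrap off the conserved $H^1$-norm using the smoothing bilinear estimate of Lemma \ref{lem:bilin2}. From the Duhamel formula, $\|u(t)\|_{H^s}\le\|u_0\|_{H^s}+\tfrac12\int_0^t\|\varphi(D)(u(\tau)^2)\|_{H^s}\,d\tau$; applying Lemma \ref{lem:bilin2} with target exponent $(s-1)+1$, placing one factor at regularity $r>\tfrac12$ with $r\ge s-1$, gives on the range $1\le s\le 2$ the bound $\|\varphi(D)(u^2)\|_{H^s}\lesssim\|u\|_{H^1}\|u\|_{H^s}=\|u_0\|_{H^1}\|u\|_{H^s}$, so Gronwall's inequality produces $\|u(t)\|_{H^s}\le\|u_0\|_{H^s}\,e^{C\|u_0\|_{H^1}t}$, finite for all $t$. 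Iterating in unit steps (estimating $\|u\|_{H^s}$ by the already-controlled $\|u\|_{H^{s-1}}$) yields global well-posedness for every $s\ge1$.

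The genuinely delicate case is $0\le s<1$, which is precisely the regime relevant to the non-squeezing theorem (where $s=\tfrac12$), and it is the main obstacle. The energy $E(u)=\|u\|_{H^1}^2$ is infinite for such data, so conservation is directly useless, and the naive $H^s$ estimate from Lemma \ref{lem:bilin1} is only quadratic and closes merely locally in time. To overcome this I would use a high--low frequency decomposition in the spirit of Bourgain \cite{bourgain}: fix $T>0$, split $u_0=\Pi^N u_0+(I-\Pi^N)u_0=:\psi_0+\phi_0$, evolve the smooth low-frequency part $\psi_0\in H^1$ by the full equation to obtain a global solution $\psi$ with $\|\psi(t)\|_{H^1}=\|\psi_0\|_{H^1}$ conserved, and write $u=\psi+w$, where $w(0)=\phi_0$ solves a BBM equation forced by the quadratic interactions $\varphi(D)(2\psi w+w^2)$. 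The crucial point is that the multiplier $\varphi(D)$ gains one derivative (Lemma \ref{lem:bilin2}), so the self-interaction $\varphi(D)(w^2)$ is smoothing and the cross term $\varphi(D)(\psi w)$ is controlled by $\psi\in H^1$ acting as a multiplier; a Gronwall estimate for $\|w(t)\|_{H^s}$ on $[0,T]$ then closes with constants depending only on $\|\psi_0\|_{H^1}$ and $T$. Choosing $N=N(T,\|u_0\|_{H^s})$ so that these constants remain controlled over the window $[0,T]$, one obtains $\sup_{0\le t\le T}\|u(t)\|_{H^s}\le\sup_{0\le t\le T}\big(\|\psi(t)\|_{H^s}+\|w(t)\|_{H^s}\big)<\infty$, which prevents blow-up and completes the globalization for $0\le s<1$. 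Balancing the growth of $w$ against the choice of the cutoff $N$ over a fixed time window is the technical heart of the argument.
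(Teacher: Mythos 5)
Your treatment of $s\ge 1$ is fine: the $H^1$ conservation law $\frac{d}{dt}\int_\T (u^2+u_x^2)\,dx=0$ does hold for BBM, and the bootstrap via Lemma \ref{lem:bilin2} works above $H^1$. The gap is in the case $0\le s<1$, which is the one that matters for the non-squeezing theorem, and it comes from having the high--low decomposition the wrong way round. You evolve the low-frequency part $\psi_0=\Pi^N u_0$ by the full equation and put the high-frequency part $w(0)=(I-\Pi^N)u_0$ into the difference equation. That difference equation still contains the self-interaction $ww_x$, and your $w$ has only $H^s$ regularity with $s<1$. Lemma \ref{lem:bilin2} cannot be applied to $\vphi(D)(w^2)$ when $s\le 1/2$ (it needs one factor above $H^{1/2}$), so the best available bound is the quadratic one from Lemma \ref{lem:bilin1}, and the resulting inequality $\frac{d}{dt}\nl w\nr_{H^s}\lesssim \nl\psi_0\nr_{H^1}\nl w\nr_{H^s}+\nl w\nr_{H^s}^2$ is of Riccati type and does not close on an arbitrary interval $[0,T]$. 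The one cancellation that kills the self-interaction, $\int_\T w^2w_x\,dx=0$, is an energy-method identity obtained by testing against $w$ at the $H^1$ level, which your $w$ does not have. Nor can smallness of $w(0)$ rescue the balance you describe: the Gronwall exponent involves $\nl\psi_0\nr_{H^1}\sim N^{1-s}\nl u_0\nr_{H^s}$, so you would need the tail $\nl(I-\Pi^N)u_0\nr_{H^s}$ to decay like $e^{-cTN^{1-s}}$, whereas for generic $u_0\in H^s$ it decays with no rate at all.

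The paper's proof swaps the two roles, and this is the idea you are missing. The \emph{high}-frequency tail $v_0=(I-\Pi^N)u_0$ is chosen small in $H^s$ (of size $\lesssim 1/T$) and is evolved by the \emph{full} BBM equation: by Theorem \ref{thm:bbmexistloc} the lifespan of that local solution is $\gtrsim 1/\nl v_0\nr_{H^s}\gtrsim T$, so smallness alone carries $v$ across the whole time window and no conservation law is needed for this rough piece. The \emph{low}-frequency part $w_0=\Pi^N u_0$ is large but lies in $H^1$, and it is this smooth piece that solves the difference equation $w_t-w_{xxt}+w_x+ww_x+(vw)_x=0$; multiplying by $w$ and integrating makes $\int_\T w^2w_x$ vanish identically, leaving only the cross term, bounded by $\nl v\nr_{L^2}\nl w\nr_{H^1}^2$, so Gronwall closes \emph{linearly} with exponent $C\int_0^T\nl v\nr_{L^2}$ and no smallness is required of $w$. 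That arrangement is what makes the two halves compatible for arbitrary $T$ and arbitrary data.
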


\begin{proof}
Fix $T>0$. The aim is to show that corresponding to any initial data $u_0 \in H^s$, there is a unique solution of (\ref{eq:bbm}) that lies in $X^s_T$. Because of theorem \ref{thm:bbmexistloc}, this result is clear for data that is small enough in $H^s$, and it is sufficient to prove the existence of a solution corresponding to initial data of arbitrary size (uniqueness is a local issue). Fix $u_0 \in H^s$ and let $N$ be such that 
$$\sum_{|k|\geq N} \!\! \left\langle k\right\rangle^{2s} \left|\widehat{u_0}(k)\right|^2  \leq T^{-2} .$$
Such values of $N$ exist since $\left\langle k\right\rangle^{s} \left|\widehat{u_0}(k)\right|$ is in $\ell^2$. Define
$$v_0(x)=\sum_{|k|\geq N} e^{ixk} \widehat{u_0}(k) .$$

By theorem \ref{thm:bbmexistloc}, there exists a unique $v\in X^s_T$ solution of (\ref{eq:bbm}) with initial data $v_0$. Split the initial data $u_0$ into two pieces: $u_0=v_0+w_0$; and consider the following Cauchy problem (where $v$ is now fixed) 
\begin{equation}
\left\{
\begin{array}{l}
w_t-w_{xxt}+w_x+ww_x+(vw)_x \\
w(0,x)=w_0(x)
\end{array}
\right.
\label{eq:bbmw}
\end{equation}
If there exists a solution $w$ of (\ref{eq:bbmw}) in $X^s_T$ then $v+w$ will be a solution of (\ref{eq:bbm}) in $X^s_T$.

First, $w_0$ is in $H^r(\T)$ for all $r>0$, in particular $w_0 \in H^1(\T)$. And (\ref{eq:bbmw}) may be rewritten as the integral equation 
$$w(t,x)=e^{-it\vphi(D)}w_0-\frac{i}2 \int_0^t \!\! e^{-i(t-\tau)\vphi(D)}\vphi(D)(vw+w^2) d\tau=\mathcal{K}(w) .$$
This problem can be solved locally in time on $H^1(\T)$ by the same arguments used to prove theorem \ref{thm:bbmexistloc}. Indeed for any $w\in B_R(X_S^1)$, by lemma \ref{lem:bilin2} (with $r=1$ and $s=0$) and lemma \ref{lem:bilin1} (with $r=r'=s=1$)
\begin{align}
\label{eqn:K1}
\nonumber \nl \mathcal{K}(w) \nr_{X_S^1} & \leq \nl w_0 \nr_{H^1} + C S \left(\nl v \nr_{X_S^0} \nl w \nr_{X_S^1} + \nl w \nr_{X_S^1}^2\right)\\
& \leq C S \nl v \nr_{X_S^0} R
\end{align}
and for any $w_1$ and $w_2$ in $B_R(X_S^1)$
\begin{align}
\label{eqn:K2}
\nonumber \nl \mathcal{K}(w_1) \right.   &-  \left. \mathcal{K}(w_2) \nr_{X_S^1} \\ 
\nonumber &\leq C S \left(\nl v \nr_{X_S^0} \nl w_1-w_2 \nr_{X_S^1} + \nl w_1-w_2 \nr_{X_S^1} \nl w_1+w_2 \nr_{X_S^1} \right)  \\
&\leq C S \left(\nl v \nr_{X_S^0} +2R\right) \nl w_1-w_2 \nr_{X_S^1} .
\end{align}
Hence, by (\ref{eqn:K1}) and (\ref{eqn:K2}), $\mathcal{K}$ has a unique fixed point in $X_S^1$. Therefore we have a solution $w$ in $X^1_S$ for a small time $S$.

If we have an \emph{a priori} bound on the $H^1$-norm of $w$ showing it was bounded on the interval $[-T,T]$ it would follow that a solution on $[-T,T]$ could be obtained.

The formal steps of this inequality are as folllows (the justification is made by regularizing). Multiply the equation (\ref{eq:bbmw}) by $w$, integrate over $\T$, and after integration by parts we get
$$\frac12 \frac{d}{dt} \int_{\T} \!\! \left( w(t,x)^2 + w_x(t,x)^2 \right) dx - \int_{\T} \!\! v(t,x)w(t,x)w_x(t,x) dx=0 .$$
By H\"older and Sobolev inequalities we deduce
\begin{align*}
 \left| \int_{\T} \!\! v(t,x)w(t,x)w_x(t,x) dx \right| & \leq \nl v(t,\cdot) \nr_{L^2} \nl w(t,\cdot) \nr_{L^{\infty}} \nl w_x(t,\cdot) \nr_{L^2} \\
& \leq C \nl v(t,\cdot) \nr_{L^2} \nl w(t,\cdot) \nr_{H^1}^2 .
\end{align*}
Hence
$$\frac{d}{dt} \nl w(t,\cdot) \nr_{H^1}^2 \leq  2C  \nl v(t,\cdot) \nr_{L^2} \nl w(t,\cdot) \nr_{H^1}^2$$
and by Gronwall's inequality 
$$\nl w(t,\cdot) \nr_{H^1} \leq  \nl w_0 \nr_{H^1} \exp \left( C \int_0^t \!\! \nl v(\tau,\cdot) \nr_{L^2} d \tau  \right) .$$

We deduce from this \emph{a priori} bound that the solution $w$ of (\ref{eq:bbmw}) exists on the interval $[-T,T]$, and $v+w$ is a solution of (\ref{eq:bbm}) in $X^s_T$.
\end{proof}

\subsection{Verification of (H2)}
\label{subsec:H2}
\begin{prop}
\label{prop:H2}
For any $T>0$, $R>0$, and $s>0$ there exists $R'$ such that 
$$ \forall 0 \leq t \leq T, \Phi_t(B_R(H^{s})) \subset B_{R'}(H^{s}) .$$
\end{prop}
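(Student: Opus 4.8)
The plan is to establish a uniform-in-time bound on the $H^s$-norm of the solution via a Gronwall-type argument, exploiting the smoothing property of the Fourier multiplier $\vphi(D)$ captured in Lemma \ref{lem:bilin1}. The starting point is the Duhamel (integral) formulation already written down in the proof of Theorem \ref{thm:bbmexistloc}, namely
$$u(t)=e^{-it\vphi(D)}u_0-\frac{i}{2}\int_0^t e^{-i(t-\tau)\vphi(D)}\vphi(D)\!\left(u(\tau)^2\right)d\tau.$$
Since $e^{-it\vphi(D)}$ is an $H^s$-isometry, taking $H^s$-norms and moving the norm inside the integral gives
$$\nl u(t)\nr_{H^s}\leq \nl u_0\nr_{H^s}+\frac12\int_0^t \nl \vphi(D)\!\left(u(\tau)^2\right)\nr_{H^s}\,d\tau.$$

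The key point is that the nonlinear term must be estimated so that the resulting differential inequality is \emph{linear} in the running $H^s$-norm, which is what produces the exponential (hence finite) bound. The naive application of Lemma \ref{lem:bilin1} with $r=r'=s$ yields $\nl \vphi(D)(u^2)\nr_{H^s}\lesssim \nl u\nr_{H^s}^2$, which is quadratic and only gives local control. The remedy is to split the product asymmetrically: first I would invoke the global well-posedness already proved (Theorems \ref{thm:bbmexistloc} and \ref{thm:bbmexistglob}) together with the conservation of the $L^2$-norm for the BBM flow to control one factor in a low norm. Concretely, one estimates $\vphi(D)(u\cdot u)$ by placing one copy of $u$ in $H^r$ with $0\leq r<s$ and the other in $H^{r'}$, choosing the exponents so that Lemma \ref{lem:bilin1} applies (the admissibility condition $0\leq 2s-r-r'<1/4$) while keeping the total order at most linear in $\nl u\nr_{H^s}$, the remaining low-order factor being controlled uniformly in $t$ by conservation laws of the equation.

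Once an inequality of the schematic form
$$\frac{d}{dt}\nl u(t)\nr_{H^s}^2 \leq C\,\bigl(1+\nl u(t)\nr_{H^s}^2\bigr)$$
(with $C$ depending only on $R$ and the conserved low-order norms) is in hand, Gronwall's inequality furnishes the bound $\nl u(t)\nr_{H^s}\leq R'=R'_{R,T}$ for all $0\leq t\leq T$, which is exactly assertion (H2). I expect the main obstacle to be the bookkeeping in the second paragraph: arranging the Littlewood--Paley-style splitting of $u^2$ so that the bilinear estimate of Lemma \ref{lem:bilin1} applies with admissible exponents while extracting a genuinely \emph{subquadratic} dependence on $\nl u\nr_{H^s}$. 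This hinges on the half-derivative gain of $\vphi(D)$ (the factor $k/(1+k^2)\sim k^{-1}$) and on having a uniform-in-time handle on a lower-order norm of $u$; identifying and justifying the correct conserved (or a priori bounded) low-order quantity for the $H^s$ flow is the delicate step, all other manipulations being routine.
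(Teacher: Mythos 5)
Your argument has a genuine gap at its foundation: the ``conserved low-order quantity'' you need does not exist. The $L^2$ norm is \emph{not} conserved by BBM; multiplying the equation by $u$ and integrating over $\T$ gives $\frac{d}{dt}\int_{\T}(u^2+u_x^2)\,dx=0$, so the coercive invariant is the $H^1$ norm, which is unavailable for data that are merely in $H^s$ with $s<1$ (the other invariants, the mean and the Hamiltonian, are no better). This is precisely why (H2) is not routine for BBM. Moreover, even if some uniform low-norm bound were granted, the admissibility window $0\le 2s-r-r'<1/4$ of Lemma \ref{lem:bilin1} only lets you place one factor of $u^2$ in $H^{r}$ with $r>s-1/4$ while keeping the other in $H^{s}$; your linear-in-$\nl u \nr_{H^s}$ Gronwall inequality therefore presupposes a uniform bound only a quarter of a derivative below $H^s$, and iterating the quarter-derivative gain downwards still requires a uniform-in-time bound at some base regularity (e.g.\ $L^2$) which is exactly the missing ingredient.

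The paper circumvents this by a frequency splitting rather than a conservation law: write $u_0=v_0+w_0$ with $v_0$ carrying the frequencies $|k|\ge N$. Choosing $N$ so large that $\nl v_0 \nr_{H^{s-\eps}}\le N^{-\eps}R\le \frac1{4CT}$, the \emph{local} existence time in $H^{s-\eps}$ from Theorem \ref{thm:bbmexistloc} already exceeds $T$, so $v$ is controlled in $X^{s-\eps}_T$ by continuity and compactness, and then in $X^s_T$ by a single application of Lemma \ref{lem:bilin1} with $r=r'=s-\eps$. The remainder $w_0$ is smooth (it lies in $H^1$), and the difference equation (\ref{eq:bbmw}) for $w$ admits the a priori bound $\nl w(t) \nr_{H^1}\le \nl w_0 \nr_{H^1}\exp\bigl(C\int_0^t \nl v(\tau,\cdot) \nr_{L^2}d\tau\bigr)$. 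Your instinct to run a Gronwall argument is realized there, but only on the smooth component, where the $H^1$ energy structure is actually available.
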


With $s=\frac12$ we deduce that $\Phi$ satisfies (H2).

\begin{proof}
The result is clear for $s\geq1$, so we assume that $0<s<1$. Fix $T>0$, $R>0$ and $u_0$ in $H^{s}$ such that $\nl u_0 \nr_{H^{s}} \leq R$. Using the same idea as in theorem \ref{thm:bbmexistglob} split $u_0$ into two pieces $u_0=v_0+w_0$, where
$$v_0=\sum_{|k|\geq N} \widehat{u_0}(k) e^{ikx} .$$
Using the same notations, let $v$ be the solution of BBM equation with the initial data $v_0$ and $w$ the solution of (\ref{eq:bbmw}). We want to control $v$ and $w$ in $H^{s}$-norm. 

Fix $\eps>0$ such that $\eps<1/8$ and $s-\eps>0$, we have
$$\nl v_0 \nr_{H^{s-\eps}} \leq N^{-\eps} \nl v_0 \nr_{H^{s}} .$$
We choose $N={\left(\frac{4RC}{T} \right)}^{1/\eps}$ where $C$ is the constant of lemma \ref{lem:bilin1}. Hence we have 
$$ \nl v_0 \nr_{H^{s-\eps}} \leq \frac1{4CT} = M.$$
By local theory (theorem \ref{thm:bbmexistloc}) the flow map 
$$\Phi : B_M(H^{s-\eps}) \longrightarrow X^{s-\eps}_T$$ 
is continuous. Since $H^{s} \cap B_M(H^{s-\eps})$ is precompact in $B_M(H^{s-\eps})$ we have 
$$\sup_{v_0 \in H^{s} \cap B_M(H^{s-\eps})} \nl \Phi(v_0) \nr_{X^{s-\eps}} = C_1(R,T) .$$
By lemma \ref{lem:bilin1} with $r=r'=s-\eps$ we have
$$\nl v \nr_{X^s} \leq \nl v_0 \nr_{H^s} + CT \nl v \nr^2_{X^{s-\eps}} \leq R + CT C_1(R,T)^2 = C_2(R,T).$$

The \emph{a priori} bound on $w$ gives
\begin{align*}
 \nl w(t) \nr_{H^s}\leq  \nl w(t) \nr_{H^1} & \leq \nl w_0 \nr_{H^{1}} \exp \left( C \int_0^t \!\! \nl v(\tau,\cdot) \nr_{L^2} d\tau \right)\\
& \leq N^{1-s} \nl w_0 \nr_{H^{s}} e^{CTC_2(R,T)}\\
& \leq C_3(R,T) .
\end{align*}
Hence, we have
$$ \nl u \nr_{X_T^{s}}  \leq C_2(R,T)+C_3(R,T)$$
\end{proof}

\begin{cor}
For each $T>0$ and $s>0$, the flow map $\Phi:H^{s} \rightarrow X^{s}_T$ is real analytic. 
\end{cor}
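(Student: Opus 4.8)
The plan is to leverage the local real-analyticity from Theorem \ref{thm:bbmexistloc} together with the uniform bound of Proposition \ref{prop:H2}, and to propagate analyticity across $[-T,T]$ by the flow (semigroup) property, using that real-analyticity is a local property in the domain and that compositions of real-analytic maps between Banach spaces are again real-analytic.

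First I would fix $u_0 \in H^s$ and a radius $R>\|u_0\|_{H^s}$, so that $B_R(H^s)$ is a neighbourhood of $u_0$; it then suffices to show that $\Phi$ is real-analytic on $B_R(H^s)$. By Proposition \ref{prop:H2} (and its evident two-sided analogue) there is $R'$ with $\Phi_t(B_R(H^s)) \subset B_{R'}(H^s)$ for all $|t|\leq T$. Set $\tau=\frac{1}{4C_sR'}$, which by Theorem \ref{thm:bbmexistloc} is a uniform existence time for data in $B_{R'}(H^s)$, so that the trajectory map $v_0 \mapsto (\sigma \mapsto \Phi_\sigma(v_0))$ is real-analytic from $B_{R'}(H^s)$ into $X_\tau^s$.

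Next I would cover $[-T,T]$ by the finitely many intervals $I_k=[k\tau,(k+1)\tau]\cap[-T,T]$. On $I_k$ the flow property gives $\Phi_t(u_0)=\Phi_{t-k\tau}(\Phi_{k\tau}(u_0))$. The map $u_0\mapsto \Phi_{k\tau}(u_0)$ is real-analytic from $B_R(H^s)$ into $B_{R'}(H^s)$: it is obtained by iterating $|k|$ time-$\tau$ flows, each a composition of the real-analytic trajectory map with the continuous linear evaluation $X_\tau^s\to H^s$ at the endpoint, all intermediate orbits remaining in $B_{R'}(H^s)$ by Proposition \ref{prop:H2}. Composing this with the real-analytic trajectory map on $B_{R'}(H^s)$ and the affine isometric time-shift $X_\tau^s=C^0([-\tau,\tau],H^s)\cong C^0([k\tau-\tau,k\tau+\tau],H^s)$ shows that $u_0\mapsto (t\mapsto \Phi_t(u_0))|_{I_k}$ is real-analytic from $B_R(H^s)$ into $C^0(I_k,H^s)$.

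Finally, the restriction maps $X_T^s\to C^0(I_k,H^s)$ are continuous and linear, and together they embed $X_T^s$ isometrically onto the closed subspace of $\prod_k C^0(I_k,H^s)$ consisting of tuples that agree on overlaps; hence a map into $X_T^s$ is real-analytic as soon as each of its restrictions to the $I_k$ is. By the previous step this holds, so $\Phi:B_R(H^s)\to X_T^s$ is real-analytic, and since $u_0$ was arbitrary the corollary follows. The main point requiring care is precisely this last gluing step — verifying that analyticity detected on a finite closed cover of the time interval yields analyticity into $X_T^s=C^0([-T,T],H^s)$ — together with the bookkeeping that keeps every intermediate orbit inside the ball $B_{R'}(H^s)$ on which the uniform local existence time $\tau$ is valid.
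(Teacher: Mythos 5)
Your proof is correct and follows essentially the same route as the paper: obtain a uniform bound $R'$ from Proposition \ref{prop:H2}, use the local theory of Theorem \ref{thm:bbmexistloc} to get a uniform existence time $\tau$ on $B_{R'}(H^s)$ with a real-analytic local flow, and iterate/compose over a finite cover of $[0,T]$ by intervals of length $\tau$. You merely spell out in more detail the gluing of the restrictions to the subintervals and the bookkeeping of the intermediate orbits, which the paper leaves implicit.
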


\begin{proof}
Let $u_0 \in H^{s}$, $R=\nl u_0 \nr_{H^{s}}$ and $T>0$. By proposition \ref{prop:H2}, there exists $R'$ such that $\Phi_t(B_{2R}(H^{s})) \subset B_{R'}(H^{s})$, for all $t\in[0,T]$. And by local theory (theorem \ref{thm:bbmexistloc}) there exists a small time $\tau$ such that $\Phi:B_{R'}(H^{s}) \rightarrow X^{s}_{\tau}$ is real analytic. Splitting the time intervalle $[0,T]$ into $\bigcup [k \tau ,(k+1)\tau]$, we deduce that $\Phi:H^{s} \rightarrow X^{s}_T$ is real analytic.
\end{proof}

\subsection{Verification of (H3)}
\label{subsec:H3}

Recalll that $\widetilde{\Phi}$ denote the non-linear part of the flow, that is $\Phi_t=e^{-it\vphi(D)}(I+\widetilde{\Phi}_t)$. The assumption (H3) results from 
\begin{prop}
\label{prop:compactness}
For any $u_0,v_0 \in B_R(H^{1/2}(\T))$ we have the following estimate 
$$\nl \widetilde{\Phi}(u_0)-\widetilde{\Phi}(v_0) \nr_{X_T^{1/2+\eps}} \leq C_{R,T,\eps} \nl u_0-v_0\nr_{H^{1/2-\eps}}$$
for $0<\eps<1/12$.
\end{prop}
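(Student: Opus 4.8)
The plan is to start from the Duhamel representation of the flow. Writing $\Phi_t=e^{-it\vphi(D)}(I+\widetilde{\Phi}_t)$ and using the integral equation for $u$, one has
$$\widetilde{\Phi}_t(u_0)=-\frac{i}{2}\int_0^t e^{i\tau\vphi(D)}\vphi(D)\!\left(u(\tau)^2\right)d\tau,$$
where $u=\Phi(u_0)$. Denoting by $u$ and $v$ the solutions with data $u_0$ and $v_0$ and subtracting, since $u^2-v^2=(u-v)(u+v)$, I get
$$\widetilde{\Phi}_t(u_0)-\widetilde{\Phi}_t(v_0)=-\frac{i}{2}\int_0^t e^{i\tau\vphi(D)}\vphi(D)\!\left((u-v)(u+v)\right)d\tau.$$
Because $e^{i\tau\vphi(D)}$ is an isometry on every $H^s(\T)$, I can drop it after taking $H^{1/2+\eps}$ norms and I am left with estimating $\nl\vphi(D)((u-v)(u+v))\nr_{H^{1/2+\eps}}$ pointwise in $\tau$.

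This is exactly the setting of lemma \ref{lem:bilin1}. Applying it with $s=1/2+\eps$, $r=1/2-\eps$ (on the factor $u-v$) and $r'=1/2$ (on the factor $u+v$) gives
$$\nl\vphi(D)((u-v)(u+v))\nr_{H^{1/2+\eps}}\leq C\nl u-v\nr_{H^{1/2-\eps}}\nl u+v\nr_{H^{1/2}}.$$
The hypotheses of lemma \ref{lem:bilin1} require $0\leq 2s-r-r'<1/4$; here $2s-r-r'=3\eps$, so this is precisely where the assumption $\eps<1/12$ is used. For the high-regularity factor I use (H2), i.e. proposition \ref{prop:H2} with $s=1/2$: since $u_0,v_0\in B_R(H^{1/2})$, the solutions stay in $B_{R'}(H^{1/2})$ on $[-T,T]$, hence $\nl u(\tau)+v(\tau)\nr_{H^{1/2}}\leq 2R'$ uniformly. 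Integrating in $\tau$ then yields
$$\nl\widetilde{\Phi}_t(u_0)-\widetilde{\Phi}_t(v_0)\nr_{H^{1/2+\eps}}\leq CR'\int_0^t\nl u(\tau)-v(\tau)\nr_{H^{1/2-\eps}}\,d\tau.$$

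It remains to control the difference of the solutions in the \emph{low} regularity norm $H^{1/2-\eps}$ by the difference of the data in the same norm; this is the crux of the compactness, since it is what converts a $H^{1/2-\eps}$ bound on the data into a $H^{1/2+\eps}$ bound on $\widetilde{\Phi}$. I would run an independent Duhamel/Gronwall argument at level $1/2-\eps$: writing the integral equation for $u-v$ and using lemma \ref{lem:bilin1} with $s=r=r'=1/2-\eps$ (so that $2s-r-r'=0<1/4$), together with $\nl u+v\nr_{H^{1/2-\eps}}\leq\nl u+v\nr_{H^{1/2}}\leq 2R'$, I obtain
$$\nl u(t)-v(t)\nr_{H^{1/2-\eps}}\leq\nl u_0-v_0\nr_{H^{1/2-\eps}}+2CR'\int_0^t\nl u(\tau)-v(\tau)\nr_{H^{1/2-\eps}}\,d\tau,$$
and Gronwall's inequality gives $\nl u(t)-v(t)\nr_{H^{1/2-\eps}}\leq e^{2CR'T}\nl u_0-v_0\nr_{H^{1/2-\eps}}$ for $t\in[-T,T]$.

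Combining the two previous displays and taking the supremum over $t\in[-T,T]$ produces the claimed estimate with $C_{R,T,\eps}=CR'Te^{2CR'T}$. The one genuinely delicate point is the bookkeeping of regularities in the first bilinear estimate: one must spend the full smoothing budget of $\vphi(D)$ to gain $2\eps$ derivatives on the difference (going from $H^{1/2-\eps}$ data to a $H^{1/2+\eps}$ output) while keeping the high-regularity factor only at the conserved level $H^{1/2}$. The exponent arithmetic closes exactly under $\eps<1/12$, and demanding more of the difference factor would break either the admissibility range of lemma \ref{lem:bilin1} or the uniform bound furnished by (H2).
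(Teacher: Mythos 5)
Your argument is correct, and the first half coincides with the paper's: Duhamel plus Lemma \ref{lem:bilin1} with $s=\tfrac12+\eps$, $\{r,r'\}=\{\tfrac12-\eps,\tfrac12\}$ (the lemma is symmetric in the two factors, so the labelling difference is immaterial), the constraint $2s-r-r'=3\eps<\tfrac14$ giving exactly $\eps<\tfrac1{12}$, and (H2) to bound $\nl u+v\nr_{X_T^{1/2}}$. Where you genuinely diverge is in controlling $\nl u-v\nr_{X_T^{1/2-\eps}}$ by $\nl u_0-v_0\nr_{H^{1/2-\eps}}$. The paper gets this Lipschitz bound ``softly'': it invokes the real-analyticity of the flow on $H^{1/2-\eps}$ (the corollary to Proposition \ref{prop:H2}) together with the relative compactness of $B_R(H^{1/2})$ in $H^{1/2-\eps}$ to bound $\sup \nl d\Phi(w_0)\nr_{\mathcal{B}(H^{1/2-\eps},X_T^{1/2-\eps})}$ and then uses the mean-value inequality. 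You instead rerun Duhamel at the level $s=r=r'=\tfrac12-\eps$ (admissible since $2s-r-r'=0$) and close with Gronwall, using $\nl u+v\nr_{H^{1/2-\eps}}\leq\nl u+v\nr_{H^{1/2}}\leq 2R'$. Your route is more elementary and self-contained — it avoids the smoothness-plus-compactness argument entirely and yields an explicit constant $CR'Te^{2CR'T}$ — at the cost of one more application of the bilinear estimate; the paper's route is shorter on the page because it recycles machinery already established for (H1) and (H2). Both are sound, and both hinge on the same two ingredients: the smoothing of $\vphi(D)$ quantified by Lemma \ref{lem:bilin1}, and the uniform $H^{1/2}$ bound from (H2).
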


\begin{proof}
Let $0<\eps<\frac1{12}$, $u_0$ and $v_0$ in $B_R(H^{1/2})$. Denoting $u$ and $v$ the solutions of BBM equation with initial data $u_0$ and $v_0$. By lemma \ref{lem:bilin1} with $s=\frac12+\eps$ and $r=\frac12$ and $r'=\frac12-\eps$ and (H2) we have
\begin{align*}
 \nl \widetilde{\Phi}_t(u_0)-\widetilde{\Phi}_t(v_0) \nr_{X_T^{1/2+\eps}} & \leq CT \nl u+v \nr_{X_T^{1/2}} \nl u-v \nr_{X_T^{1/2-\eps}} \\
& \leq 2CT R'_{R,T}  \nl u-v \nr_{X_T^{1/2-\eps}} .
\end{align*}
Since $u_0$ and $v_0$ are in $B_R(H^{1/2})$ and $\Phi$ is $C^1$ on $B_R(H^{1/2})$ which is a relatively compact subset of $H^{1/2-\eps}$ we have
\begin{align*}
\nl u-v \nr_{X_T^{1/2-\eps}} &= \nl \Phi_t(u_0)-\Phi_t(v_0) \nr_{X_T^{1/2-\eps}} \\
& \!\!\!\!\!\!\!\!\!\!\!\!  \leq \sup_{ w_0 \in B_R(H^{1/2}) \cap H^{1/2-\eps}} \! \left( \! \nl d \Phi (w_0) \nr_{\mathcal{B}\left(H^{1/2-\eps},X^{1/2-\eps}_T\right)} \! \right) \! \nl u_0-v_0 \nr_{H^{1/2-\eps}} \\
& \!\!\!\!\!\!\!\!\!\!\!\! \leq  C_{R,T,\eps} \nl u_0-v_0 \nr_{H^{1/2-\eps}} .
\end{align*}

\end{proof}

Hence, we can apply the non-squeezing theorem (theorem \ref{thm:nonsqueezing}) and that proves the theorem \ref{thm:BBMnonsqueezing}.

\hspace{5mm}

\textbf{Acknowledgments:} I'm grateful to Nikolay Tzvetkov for introducing me to this subject and for his advices on my work. I would also like to thank Patrick G\'erard for many helpful discussions.

I thank the referee for pointing out an error in a previous version of this paper.

\bibliographystyle{amsplain}
\bibliography{biblio}

\end{document}